\newtheorem{thm}{Theorem}[section]
\newtheorem{cor}[thm]{Corollary}
\newtheorem{lem}[thm]{Lemma}
\newtheorem{prop}[thm]{Proposition}
\theoremstyle{definition}
\newtheorem*{theorem}{\textbf{Main Theorem}}
\newtheorem*{proof M}{\textbf{Proof of  Main Theorem}}
\theoremstyle{remark}
\numberwithin{equation}{section}
\begin{document}

\title{ On the Schur multiplier of nilpotent Lie superalgebra }%
\author[A. Shamsaki]{Afsaneh Shamsaki}
\address{School of Mathematics and Computer Science\\
Damghan University, Damghan, Iran}
\email{Shamsaki.Afsaneh@yahoo.com}
\author[P. Niroomand]{Peyman Niroomand}
\email{niroomand@du.ac.ir, p$\_$niroomand@yahoo.com}
\address{School of Mathematics and Computer Science\\
Damghan University, Damghan, Iran}

\keywords{Schur multiplier; nilpotent Lie superalgebra}%
\thanks{\textit{Mathematics Subject Classification 2010.} 17B30}

\begin{abstract}
Let $L$ be an $(m\vert n)$-dimensional nilpotent Lie superalgebra where $m + n \geq 4$ and $n \geq 1$. This paper classifies such nilpotent Lie superalgebras $L$ with a derived subsuperalgebra of dimension $m+n-2$ such that $\gamma(L) = m + 2n - 2 - \dim \mathcal{M}(L)$, where $\gamma(L) \in \{0, 1, 2\}$ and $\mathcal{M}(L)$ denotes the Schur multiplier of $L$. Furthermore, we show that all these superalgebras are capable.
\end{abstract}

\maketitle

\section{Introduction and Preliminaries}

Let $L$ be a finite-dimensional Lie superalgebra, and let $L \cong F/R$, where $F$ is a free Lie superalgebra. The Schur multiplier $\mathcal{M}(L)$ of $L$ is isomorphic to $R \cap F^2 / [R, F]$ as shown in \cite{Nay1}. Classifying Lie superalgebras of arbitrary dimensions without any constraints is a challenging task. In fact, the classification of all finite-dimensional Lie superalgebras remains an open problem. A common approach is to classify nilpotent Lie superalgebras based on a fixed invariant. Some researchers focus on the structure of $L$ by examining the dimension of the Schur multiplier. This invariant aids in classifying certain nilpotent Lie superalgebras. For example, the classification of nilpotent Lie superalgebras based on the dimension of the Schur multiplier is discussed in \cite{Nay2}. According to \cite{Nay1}, the dimension of the Schur multiplier of an $(m\mid n)$-dimensional nilpotent Lie superalgebra $L$ with the derived supersubalgebra of maximum dimension $m+n-2$ is bounded by $m+2n-2$. Thus, there exists a non-negative integer $\gamma(L)$ such that $\gamma(L) = m + 2n - 2 - \dim \mathcal{M}(L)$.

Throughout this paper, we assume that $L$ is a Lie superalgebra over a field of characteristic not equal to 2 or 3. We classify all finite-dimensional nilpotent Lie superalgebras $L$ when $\gamma(L) \in \{0, 1, 2\}$. Additionally, we demonstrate that all these Lie superalgebras are capable. Recall from \cite{Rud2} that a Lie superalgebra $L$ is said to be capable if there exists a Lie superalgebra $M$ such that $L \cong M/Z(M)$. Also, $Z^{*}(L)$ is the epicenter of a Lie superalgebra $L$, defined as the smallest ideal in $L$ such that $L/Z^{*}(L)$ is capable. Clearly, the ideal $Z^{*}(L)=0$ if and only if $L$ is capable. For more details on capable Lie superalgebras, see \cite{Rud2}.

The following theorem is a useful tool for identifying capable Lie superalgebras.

\begin{thm}\cite[Theorem 4.9]{Rud1}\label{6}
Let $N$ be a central ideal of the Lie superalgebra $L$. Then the following conditions are equivalent:
\begin{itemize}
    \item[(i)] $\mathcal{M}(L) \cong \mathcal{M}(L/N)/N \cap L^2,$ 
    \item[(ii)] $N \subseteq Z^{*}(L),$
    \item[(iii)] the natural map $\mathcal{M}(L) \longrightarrow \mathcal{M}(L/N)$ is a monomorphism.
\end{itemize}
\end{thm}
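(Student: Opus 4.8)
The plan is to fix a free presentation $0 \to R \to F \to L \to 0$ with $F$ a free Lie superalgebra and to translate the three conditions into the lattice of graded ideals of $F$. Write $N = S/R$ for a graded ideal $S$ of $F$ with $R \subseteq S$; centrality of $N$ forces $[S,F] \subseteq R$, and of course $[S,F] \subseteq F^2$. Using the Hopf-type formulas $\mathcal{M}(L) \cong (R \cap F^2)/[R,F]$ and $\mathcal{M}(L/N) \cong (S \cap F^2)/[S,F]$, the natural map in (iii) is the homomorphism induced by the inclusions $R \subseteq S$ and $[R,F] \subseteq [S,F]$; a direct computation identifies its kernel with $(R \cap [S,F])/[R,F]$ (using $[R,F] \subseteq R \cap [S,F] \subseteq F^2$) and, since $[S,F] \subseteq R \cap F^2$ by centrality, its cokernel with $(S \cap F^2)/(R \cap F^2)$, which by the modular law is isomorphic to $N \cap L^2$. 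Thus one recovers the exact sequence
\[
\mathcal{M}(L) \longrightarrow \mathcal{M}(L/N) \longrightarrow N \cap L^2 \longrightarrow 0
\]
(the tail of the five-term homology sequence of $0 \to N \to L \to L/N \to 0$), from which (i) $\Leftrightarrow$ (iii) is immediate: the natural map is a monomorphism exactly when it identifies $\mathcal{M}(L)$ with the kernel of $\mathcal{M}(L/N) \to N \cap L^2$, that is, with $\mathcal{M}(L/N)/(N \cap L^2)$.

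For (ii) $\Leftrightarrow$ (iii) the key preliminary step is the presentation-theoretic description of the epicenter: $Z^{*}(L) = S_0/R$, where $S_0 = \{x \in F : [x,F] \subseteq [R,F]\}$. One inclusion is easy, since $S_0/[R,F]$ is precisely the centre of the Lie superalgebra $F/[R,F]$, so that $L/(S_0/R) \cong (F/[R,F])/Z(F/[R,F])$ is capable and hence $Z^{*}(L) \subseteq S_0/R$. For the reverse inclusion, suppose $L/N \cong M/Z(M)$ is capable; lifting the composite $F \to L \to L/N$ along the free superalgebra $F$ yields $\psi \colon F \to M$ with $\psi(S) \subseteq Z(M)$ and $\psi(F) + Z(M) = M$, whence $[F,R] \subseteq \ker\psi$ and $\psi$ factors through $F/[R,F]$. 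If $x \in S_0$ then $[\psi(x),\psi(F)] = \psi([x,F]) = 0$, and combined with $\psi(F) + Z(M) = M$ this forces $\psi(x) \in Z(M)$; therefore $\psi(S_0) \subseteq Z(M)$, so $\psi$ descends to a homomorphism $F/S_0 \to M/Z(M) \cong F/S$ commuting with the canonical projections from $F$, which is possible only if $S_0 \subseteq S$. Taking $N = Z^{*}(L)$ then gives $S_0/R \subseteq Z^{*}(L)$, and the two inclusions yield $Z^{*}(L) = S_0/R$.

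Granting this, condition (ii) reads $S \subseteq S_0$, i.e. $[S,F] \subseteq [R,F]$, while (iii) reads $R \cap [S,F] = [R,F]$. If $[S,F] \subseteq [R,F]$, then $R \cap [S,F] \subseteq [R,F]$, and the reverse inclusion always holds, so (iii) follows. Conversely, if $R \cap [S,F] = [R,F]$, then since $[S,F] \subseteq R$ by centrality we get $[S,F] = R \cap [S,F] = [R,F]$, which is (ii). This closes the cycle and completes the argument.

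I expect the main obstacle to be the epicenter step: pinning down $Z^{*}(L) = S_0/R$ requires both the ``capable quotient'' direction, which is routine once the centre of $F/[R,F]$ is computed, and the minimality direction, which rests on the universal lifting property of the free superalgebra $F$ together with the elementary but slightly delicate observation that an element whose bracket with $\psi(F)$ vanishes must lie in $Z(M)$ as soon as $\psi(F)$ and $Z(M)$ together generate $M$. Everything else is routine manipulation of graded ideals; the super-Jacobi identity enters only to confirm that $[R,F]$ and $S_0$ are ideals of $F$ and that the super-signs do not disturb these computations.
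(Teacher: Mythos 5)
Your argument is correct, but there is nothing in the paper to compare it with: the statement is quoted verbatim from \cite[Theorem 4.9]{Rud1} and used as an imported tool, so the paper contains no proof of it. What you have written is a sound, self-contained reconstruction along the classical Beyl--Felgner--Schmid lines transported to Lie superalgebras, and it is presumably the route taken in the cited source. The individual steps all check out: the Hopf formulas give the natural map $(R\cap F^{2})/[R,F]\longrightarrow (S\cap F^{2})/[S,F]$ with kernel $(R\cap[S,F])/[R,F]$ and, via the modular law and the centrality inclusion $[S,F]\subseteq R\cap F^{2}$, cokernel $N\cap L^{2}$; over a field the equivalence of (i) and (iii) then reduces to the dimension count coming from this exact tail of the five-term sequence (which is also the only sensible reading of (i), since $N\cap L^{2}$ is not literally a subspace of $\mathcal{M}(L/N)$). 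The identification $Z^{*}(L)=S_{0}/R$ with $S_{0}=\set{x\in F : [x,F]\subseteq [R,F]}$ is the real content, and your two inclusions are both complete: $S_{0}/[R,F]=Z(F/[R,F])$ gives capability of $L/(S_{0}/R)$, and the lifting argument through a free presentation, using $\psi(F)+Z(M)=M$ to force $\psi(S_{0})\subseteq Z(M)$, gives minimality (and, as a byproduct, that the epicenter is well defined as a smallest ideal). The final translation of (ii) into $[S,F]\subseteq[R,F]$ and of (iii) into $R\cap[S,F]=[R,F]$, closed up using $[S,F]\subseteq R$, is exactly right.
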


The lower central series is defined by $C^{0}(L) = L$ and inductively by $C^{i+1}(L) = [L, C^{i}(L)]$. A Lie superalgebra $L$ is nilpotent if there exists an integer $n$ such that $C^{n}(L) = 0$. Let $L = L_{\overline{0}} \oplus L_{\overline{1}}$. Similarly, we may define the following sequences for $L$ as defined in \cite{G}:

\begin{align*}
 &C^{0}(L_{\overline{0}}) = L_{\overline{0}} \quad \text{and} \quad C^{k}(L_{\overline{0}}) = [L_{\overline{0}}, C^{k-1}(L_{\overline{0}})] \quad \text{for all} \quad k \geq 1,\\
 &C^{0}(L_{\overline{1}}) = L_{\overline{1}} \quad \text{and} \quad C^{k}(L_{\overline{1}}) = [L_{\overline{0}}, C^{k-1}(L_{\overline{1}})] = ad_{L_{\overline{0}}} (C^{k-1}(L_{\overline{1}})) \quad \text{for all} \quad k \geq 1.
\end{align*}

\begin{thm}\cite[Theorem 2.1]{G}
The Lie superalgebra $L = L_{\overline{0}} \oplus L_{\overline{1}}$ is nilpotent if and only if there exist $(p, q) \in \mathbb{N} \times \mathbb{N}$ such that $C^{p-1}(L_{\overline{0}}) \neq 0$, $C^{q-1}(L_{\overline{1}}) \neq 0$, and $C^{p}(L_{\overline{0}}) = C^{q}(L_{\overline{1}}) = 0$.
\end{thm}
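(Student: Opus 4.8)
The plan is to prove the two directions separately; they are of quite different character, one being pure bookkeeping and the other requiring an Engel-type input.

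\textbf{The easy direction ($L$ nilpotent $\Rightarrow$ the sequences terminate).} First I would establish, by induction on $k$, the inclusions $C^{k}(L_{\overline{0}})\subseteq C^{k}(L)$ and $C^{k}(L_{\overline{1}})\subseteq C^{k}(L)$. The base cases read $C^{0}(L_{\overline{0}})=L_{\overline{0}}\subseteq L$ and $C^{0}(L_{\overline{1}})=L_{\overline{1}}\subseteq L$, and the inductive step uses $C^{k+1}(L_{\overline{0}})=[L_{\overline{0}},C^{k}(L_{\overline{0}})]\subseteq[L,C^{k}(L)]=C^{k+1}(L)$ and, identically, $C^{k+1}(L_{\overline{1}})=[L_{\overline{0}},C^{k}(L_{\overline{1}})]\subseteq[L,C^{k}(L)]=C^{k+1}(L)$. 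Hence if $C^{N}(L)=0$, then $C^{N}(L_{\overline{0}})=C^{N}(L_{\overline{1}})=0$, and choosing $p$, $q$ minimal with $C^{p}(L_{\overline{0}})=0$ and $C^{q}(L_{\overline{1}})=0$ produces the required pair (the degenerate possibilities $L_{\overline{0}}=0$ or $L_{\overline{1}}=0$ are disposed of by inspection).

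\textbf{The main direction.} Given $p$, $q$ with $C^{p}(L_{\overline{0}})=C^{q}(L_{\overline{1}})=0$, I would show that $\mathrm{ad}_{x}$ is a nilpotent operator on $L$ for every homogeneous $x$, and then invoke Engel's theorem for Lie superalgebras. For $x\in L_{\overline{0}}$ one has $\mathrm{ad}_{x}^{k}(L_{\overline{0}})\subseteq C^{k}(L_{\overline{0}})$ and $\mathrm{ad}_{x}^{k}(L_{\overline{1}})\subseteq C^{k}(L_{\overline{1}})$ (the same kind of induction), so $\mathrm{ad}_{x}^{\max(p,q)}$ kills $L_{\overline{0}}\oplus L_{\overline{1}}=L$. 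For $y\in L_{\overline{1}}$ the graded Jacobi identity applied with repeated argument $y$ gives $[[y,y],z]=2[y,[y,z]]$ for all $z$, hence $\mathrm{ad}_{y}^{2}=\frac{1}{2}\,\mathrm{ad}_{[y,y]}$ (this is where $\mathrm{char}\neq 2$ enters), and since $[y,y]\in L_{\overline{0}}$ the previous case shows $\mathrm{ad}_{[y,y]}$ is nilpotent, so $\mathrm{ad}_{y}$ is too. With all $\mathrm{ad}_{x}$ ($x$ homogeneous) nilpotent, the super-Engel theorem yields a flag $0=L_{0}\subset L_{1}\subset\cdots\subset L_{r}=L$ of graded subspaces with $[L,L_{i}]\subseteq L_{i-1}$, whence $C^{r}(L)=0$ and $L$ is nilpotent.

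\textbf{Main obstacle.} The only non-formal ingredient is the super-Engel theorem in the converse. A self-contained substitute would be to work directly with the graded lower central series via $C^{k+1}(L)_{\overline{0}}=[L_{\overline{0}},C^{k}(L)_{\overline{0}}]+[L_{\overline{1}},C^{k}(L)_{\overline{1}}]$ and $C^{k+1}(L)_{\overline{1}}=[L_{\overline{0}},C^{k}(L)_{\overline{1}}]+[L_{\overline{1}},C^{k}(L)_{\overline{0}}]$, pass to the stable term $D=C^{N}(L)=C^{N+1}(L)=\cdots$, use $\mathrm{ad}_{L_{\overline{0}}}^{\max(p,q)}=0$ to collapse the relations to $D_{\overline{0}}=[L_{\overline{1}},D_{\overline{1}}]$ and $D_{\overline{1}}=[L_{\overline{1}},D_{\overline{0}}]$, and then force $D=0$. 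The delicate point there is taming the iterated odd--odd brackets $[L_{\overline{1}},[L_{\overline{1}},\,\cdot\,]]$, which is exactly what the identity $\mathrm{ad}_{y}^{2}=\frac{1}{2}\,\mathrm{ad}_{[y,y]}$, together with the nilpotence of the $L_{\overline{0}}$-action, settles.
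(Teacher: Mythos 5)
This statement is quoted in the paper as \cite[Theorem 2.1]{G} and is given no proof there, so there is nothing internal to compare your argument against; judged on its own terms, your proof is correct. The forward direction via the inclusions $C^{k}(L_{\overline{0}})\subseteq C^{k}(L)$ and $C^{k}(L_{\overline{1}})\subseteq C^{k}(L)$ is routine and right. In the converse, the reduction to ad-nilpotency of homogeneous elements is the right idea: for even $x$ the inclusions $\mathrm{ad}_{x}^{k}(L_{\overline{0}})\subseteq C^{k}(L_{\overline{0}})$ and $\mathrm{ad}_{x}^{k}(L_{\overline{1}})\subseteq C^{k}(L_{\overline{1}})$ follow directly from the definitions of the two series, and your identity $\mathrm{ad}_{y}^{2}=\tfrac{1}{2}\,\mathrm{ad}_{[y,y]}$ for odd $y$ is a correct instance of the graded Jacobi identity (and is where the paper's standing hypothesis $\mathrm{char}\neq 2$ is genuinely used); since $[y,y]\in L_{\overline{0}}$, the odd case reduces to the even one. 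The only external input is Engel's theorem for Lie superalgebras, which is standard for finite-dimensional $L$ (the setting of this paper) and which you correctly identify as the non-formal step, offering a plausible self-contained substitute. Two small points worth tightening: (a) the clause ``$C^{q-1}(L_{\overline{1}})\neq 0$'' is unsatisfiable when $L_{\overline{1}}=0$, so the statement as written tacitly assumes both homogeneous components are nonzero, and ``disposed of by inspection'' should say this explicitly; (b) in the even case you should note that $\mathrm{ad}_{x}$ preserves the decomposition $L_{\overline{0}}\oplus L_{\overline{1}}$, so killing each summand separately after $\max(p,q)$ steps does kill all of $L$. Neither affects correctness.
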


For convenience, we list some results regarding the Schur multiplier of a Lie superalgebra.

\begin{thm}\cite[Theorem 3.4]{Nay1}\label{th1.1}
Let $L$ be an $(m\vert n)$-dimensional Lie superalgebra. Then 
\begin{equation*}
\dim \mathcal{M}(L) = \frac{1}{2} \big( (m+n)^2 + (n-m) \big)
\end{equation*}
if and only if $L$ is abelian.
\end{thm}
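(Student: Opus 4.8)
The plan is to prove the two directions separately: the ``if'' direction by computing $\mathcal{M}$ of an abelian superalgebra directly, and the ``only if'' direction by showing that $\tfrac12\big((m+n)^{2}+(n-m)\big)$ is the \emph{largest} value $\dim\mathcal{M}(L)$ can take and is attained only when $L$ is abelian.

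First suppose $L$ is abelian. Choosing a free presentation $L\cong F/R$ with $F$ free on a homogeneous basis of $L$ (so that $R=F^{2}$), the Hopf-type formula gives $\mathcal{M}(L)\cong (R\cap F^{2})/[R,F]=F^{2}/[F,F^{2}]$, which is the degree-two component of the free Lie superalgebra on generators $x_{1},\dots ,x_{m}$ spanning $L_{\overline{0}}$ and $y_{1},\dots ,y_{n}$ spanning $L_{\overline{1}}$. Since the super-bracket is antisymmetric on even--even pairs and symmetric on odd--odd pairs, a basis of this component is $\{[x_{i},x_{j}]:i<j\}\cup\{[x_{i},y_{k}]\}\cup\{[y_{k},y_{l}]:k\le l\}$, so $\dim\mathcal{M}(L)=\binom{m}{2}+mn+\binom{n+1}{2}$; reducing this sum over the common denominator $2$ yields exactly $\tfrac12\big((m+n)^{2}+(n-m)\big)$, as one can verify on the small cases $(2\vert 0)$, $(0\vert 2)$, $(1\vert 1)$.

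For the converse I would prove, by induction on $m+n$, that $\dim\mathcal{M}(L)\le\tfrac12\big((m+n)^{2}+(n-m)\big)$ for every $(m\vert n)$-dimensional nilpotent Lie superalgebra, with equality forcing $L^{2}=0$; the computation above is the base case. In the inductive step, assume $L$ is non-abelian, so $L^{2}\ne0$; since $L$ is nilpotent, the last nonzero term of its lower central series is a nonzero central ideal inside $L^{2}$, hence we may pick a one-dimensional central ideal $N\subseteq Z(L)\cap L^{2}$. The Lie-superalgebra analogue of the five-term homology sequence, together with its Ganea-type extension for a central ideal, yields the exact sequence
\[
(L/L^{2})\otimes N \To \mathcal{M}(L) \To \mathcal{M}(L/N) \To N\cap L^{2} \To 0 .
\]
Since $\dim N=1$ and $N\subseteq L^{2}$, this gives $\dim\mathcal{M}(L)\le \dim(L/L^{2})+\dim\mathcal{M}(L/N)-1\le (m+n-1)+\dim\mathcal{M}(L/N)-1$, using $L^{2}\ne0$. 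The quotient $L/N$ (again nilpotent) has dimension $(m-1\vert n)$ or $(m\vert n-1)$ according to the parity of $N$; feeding the induction hypothesis for $L/N$ into the last inequality and simplifying, one obtains $\dim\mathcal{M}(L)\le\tfrac12\big((m+n)^{2}+(n-m)\big)-1$ when $N$ is even and $-2$ when $N$ is odd. In particular the bound holds, and it is strict whenever $L$ is non-abelian, which is precisely what the theorem asserts.

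The step I expect to be the real obstacle is installing the correct \emph{super} form of the Ganea/five-term exact sequence and tracking the parity bookkeeping through it: the central generator $N$ enters both via the tensor term $(L/L^{2})\otimes N$ and, in the inductive application, via the shift $(m\vert n)\mapsto(m-1\vert n)$ versus $(m\vert n-1)$ in the target formula, and it is exactly this parity dependence that makes the final arithmetic come out with a margin in both cases. A secondary point is the non-nilpotent range of the statement (not needed elsewhere in this paper): there a central ideal inside $L^{2}$ need not exist, and I would settle it by a separate direct estimate --- using Theorem~\ref{6} with $N=Z(L)$ when the center is nonzero, and the (semisimple-like) centerless case by hand --- or simply invoke \cite{Nay1}. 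Once the recursive bound is in place, the implication ``equality $\Rightarrow$ abelian'' needs no further argument, since strict inequality was obtained at every non-abelian step of the induction.
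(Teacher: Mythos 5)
The paper gives no proof of this statement at all --- it is imported verbatim from \cite[Theorem 3.4]{Nay1} --- so there is nothing internal to compare your argument against; what follows is an assessment of the proposal on its own terms. Your ``if'' direction is correct and complete: for abelian $L$ one does have $R=F^{2}$, the quotient $F^{2}/[F^{2},F]$ is the degree-two component of the free Lie superalgebra, and $\binom{m}{2}+mn+\binom{n+1}{2}=\tfrac12\big((m+n)^{2}+(n-m)\big)$ checks out. In the converse, however, the exact sequence you install is not the correct super analogue: for a central graded ideal $N$ the left-hand input must also include $\mathcal{M}(N)$ (this is visible in Theorem \ref{th1.2}(i) of this very paper), and $\mathcal{M}(N)$ is \emph{one}-dimensional when $N$ is an odd line, because $[y,y]$ survives in the free object. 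Redoing your arithmetic with this extra $+1$ shrinks the margin in the odd case from $2$ to $1$, so the strict inequality --- and hence the characterization of equality --- still holds; this is a repairable slip, not a fatal one, but as written the sequence is wrong.

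The genuine gap is the scope. The theorem is stated for an arbitrary $(m\vert n)$-dimensional Lie superalgebra, while your induction needs a one-dimensional graded central ideal inside $L^{2}$, which exists only because $L$ is nilpotent (the last nonzero term of the lower central series); for a centerless or non-nilpotent $L$ the inductive step has no starting point. The patches you sketch do not close this: $Z(L)$ need not meet $L^{2}$, Theorem \ref{6} concerns capability rather than multiplier bounds, and ``the centerless case by hand'' is not an argument --- while ``invoke \cite{Nay1}'' is circular when the goal is to prove the cited theorem. The standard way to get the full statement without nilpotency (Moneyhun's argument for Lie algebras, which is what \cite{Nay1} adapts) is to set $B=F/[R,F]$, observe that $B/Z(B)$ is a quotient of $L$ and hence has superdimension at most $(m\vert n)$, bound $\dim B^{2}$ by $\tfrac12\big((m+n)^{2}+(n-m)\big)$ via the super analogue of the bound on $\dim[B,B]$ in terms of $B/Z(B)$, and note $\mathcal{M}(L)\subseteq B^{2}$; equality then forces $L^{2}=0$. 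That route is uniform in $L$ and avoids the induction entirely.
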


\begin{thm}\cite[Theorem 3.7]{Nay1}\label{th1.2}
Let $L = L_{\overline{0}} \oplus L_{\overline{1}}$ be a Lie superalgebra with $\dim L = (m\vert n)$ and let $K \subseteq Z(L)$ be a graded ideal. If $H \cong L/K$ is the quotient Lie superalgebra, then 
\begin{align*}
&(i) \quad \dim \mathcal{M}(L) + \dim (L^2 \cap K) \leq \dim \mathcal{M}(H) + \dim \mathcal{M}(K) + \dim (H/H^2 \otimes K/K^2), \\
&(ii) \quad \dim \mathcal{M}(L) + \dim (L^2 \cap K) \leq \frac{1}{2} \big( (m+n)^2 + (n-m) \big).
\end{align*}
\end{thm}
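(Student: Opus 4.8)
We sketch a proof. The whole statement is read off from the Ganea-type (``stem'') exact sequence attached to the central extension $0 \to K \to L \to H \to 0$ of Lie superalgebras: over the given ground field there is an exact sequence of $\mathbb{Z}_{2}$-graded vector spaces
\begin{equation*}
\mathcal{M}(K) \oplus \big(H/H^{2} \otimes K/K^{2}\big) \xrightarrow{\ \alpha\ } \mathcal{M}(L) \xrightarrow{\ \beta\ } \mathcal{M}(H) \xrightarrow{\ \theta\ } K \cap L^{2} \longrightarrow 0 .
\end{equation*}
The plan is to establish this sequence, then to obtain (i) by a rank count on it, and finally to deduce (ii) from (i) by induction on $\dim L$, anchored by Theorem \ref{th1.1}.

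To get the sequence I would start from a free presentation $L \cong F/R$ together with the super Hopf formula $\mathcal{M}(L) = (R \cap F^{2})/[R,F]$ used in \cite{Nay1}. The right-hand portion $\mathcal{M}(L) \to \mathcal{M}(H) \to K \cap L^{2} \to 0$ is the homological five-term exact sequence of the extension: since $K \subseteq Z(L)$ one has $[L,K]=0$, so the term $K/[L,K]$ equals $K$, and the image of $\mathcal{M}(H)$ in $K$ is exactly $\ker(K \to L/L^{2}) = K \cap L^{2}$. The extension on the left — identifying $\ker\beta = \operatorname{im}\alpha$ as a quotient of $\mathcal{M}(K) \oplus (H/H^{2} \otimes K/K^{2})$ — is the super analogue of Ganea's map and comes from the non-abelian tensor square of Lie superalgebras; note also that $K$, being central, is abelian, so $K/K^{2}=K$ and the tensor factor is just $H/H^{2}\otimes K$.

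Granting the sequence, (i) is pure dimension counting. Since $\theta$ is onto and $\operatorname{im}\beta=\ker\theta$, we get $\dim\operatorname{im}\beta = \dim\mathcal{M}(H) - \dim(K \cap L^{2})$; since $\ker\beta=\operatorname{im}\alpha$ is a quotient of the source of $\alpha$, we get $\dim\ker\beta \le \dim\mathcal{M}(K) + \dim(H/H^{2}\otimes K/K^{2})$. Adding $\dim\mathcal{M}(L)=\dim\ker\beta+\dim\operatorname{im}\beta$ and rearranging gives exactly (i).

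For (ii) it is enough to prove the sharper estimate $\dim\mathcal{M}(L) + \dim L^{2} \le \tfrac12\big((m+n)^{2}+(n-m)\big)$, since $L^{2}\cap K \subseteq L^{2}$. I would prove this by induction on $\dim L=(m\mid n)$; write $M=m+n$. If $L$ is abelian the claim is Theorem \ref{th1.1}. Otherwise pick a nonzero graded ideal $K_{0}\subseteq Z(L)$ (available when $Z(L)\ne 0$; the centreless case is immaterial for the nilpotent applications in this paper and is disposed of separately, e.g.\ via a stem cover), put $H_{0}=L/K_{0}$ with $R=\dim K_{0}$ (total dimension), and note $\dim L^{2}=\dim(L^{2}\cap K_{0})+\dim H_{0}^{2}$. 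Applying (i) to $K_{0}$ gives
\begin{equation*}
\dim\mathcal{M}(L)+\dim L^{2} \le \big(\dim\mathcal{M}(H_{0})+\dim H_{0}^{2}\big)+\dim\mathcal{M}(K_{0})+\dim\big(H_{0}/H_{0}^{2}\otimes K_{0}\big);
\end{equation*}
bounding the first bracket by the inductive hypothesis, $\dim\mathcal{M}(K_{0})$ by Theorem \ref{th1.1}, and the tensor term by $(M-R)R$ (the total superdimension of a tensor product of graded spaces), the three contributions add up — via $(M-R)^{2}+2R(M-R)+R^{2}=M^{2}$ and cancellation of the ``defect'' terms $\pm(\dim K_{0,\overline1}-\dim K_{0,\overline0})$ against those of $H_{0}$ — to $\tfrac12\big(M^{2}+(n-m)\big)$, completing the induction. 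The main obstacle is the first step: producing and verifying the super Ganea sequence, in particular pinning down $\ker\beta$ through the non-abelian tensor square of Lie superalgebras with the correct parity signs and confirming exactness/surjectivity at its right-hand end; once that is in hand, (i) is formal and (ii) is the bookkeeping above plus the minor separate treatment of the centreless case.
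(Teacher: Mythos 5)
First, note that the paper gives no proof of this statement at all: it is imported verbatim from \cite[Theorem 3.7]{Nay1}, so there is no internal argument to compare yours against; the comparison below is with the method of the cited source (an explicit Hopf-formula computation on a free presentation, in the style of the Lie-algebra antecedents of Batten--Moneyhun--Stitzinger).

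For part (i) your dimension count is formally correct \emph{granted} the exact sequence you posit, but that sequence is the entire content of the theorem and you have not established it --- you say so yourself (``the main obstacle''). The genuine gap is at the left-hand end: the Ganea sequence one actually gets (from the super analogue of the Lyndon--Hochschild--Serre argument, or from the non-abelian tensor square) has source $L/L^{2}\otimes K$, not $\mathcal{M}(K)\oplus(H/H^{2}\otimes K/K^{2})$. To convert the former into the latter you must tensor the exact sequence $0\to KL^{2}/L^{2}\to L/L^{2}\to H/H^{2}\to 0$ with $K$, identify $KL^{2}/L^{2}$ with a quotient of $K$, and then show that the restriction of the Ganea map to the resulting copy of $(K/(K\cap L^{2}))\otimes K$ factors through the super-exterior square of $K$, i.e.\ through $\mathcal{M}(K)$, with the correct parity signs. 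None of this is routine bookkeeping; it is precisely what \cite{Nay1} proves by hand with the Hopf formula $\mathcal{M}(L)=(R\cap F^{2})/[F,R]$ applied to $R\subseteq S\subseteq F$ with $K=S/R$.

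For part (ii) your route is both more roundabout than necessary and has a hole. The induction you propose requires a nonzero graded central ideal at every step, which fails when $Z(L)=0$; the theorem is stated for arbitrary Lie superalgebras, and ``disposed of separately, e.g.\ via a stem cover'' is not an argument. You are also proving a strictly stronger statement (with $\dim L^{2}$ in place of $\dim(L^{2}\cap K)$) than what is asked. In fact (ii) follows from (i) with no induction at all: writing $\dim H=(m_{1}\vert n_{1})$ and $\dim K=(m_{2}\vert n_{2})$, bound $\dim\mathcal{M}(H)$ by $\frac{1}{2}\big((m_{1}+n_{1})^{2}+(n_{1}-m_{1})\big)$ (the universal abelian upper bound from \cite{Nay1}, of which Theorem \ref{th1.1} records the equality case), note $\dim\mathcal{M}(K)=\frac{1}{2}\big((m_{2}+n_{2})^{2}+(n_{2}-m_{2})\big)$ exactly since $K$ is abelian, and bound the tensor term by $(m_{1}+n_{1})(m_{2}+n_{2})$; the three terms sum to $\frac{1}{2}\big((m+n)^{2}+(n-m)\big)$ identically. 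You should replace your induction with this one-line deduction and concentrate your effort on actually proving the exactness claim underlying (i).
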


\begin{thm}\cite[Theorem 5.4]{Nay1}\label{tb}
Let $L$ be a non-abelian nilpotent Lie superalgebra with $\dim L = (m\vert n)$, where $m + n \geq 3$ and $\dim L^2 = m + n - 2$. 
\begin{itemize}
\item[(i)] If $m + n = 3$, then $\dim \mathcal{M}(L)$ is equal to 1 or 2.
\item[(ii)] If $m + n \geq 4$ and $n \geq 1$, then $\dim \mathcal{M}(L) \leq m + 2n - 2$.
\end{itemize}
\end{thm}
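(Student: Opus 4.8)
The plan is to prove (i) by a hands-on analysis in dimension three and then to deduce (ii) from it by induction on $m+n$.

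For (i): since a purely odd Lie superalgebra has $L^2\subseteq L_{\overline 0}=0$, necessarily $m\ge 1$ and $(m\vert n)\in\{(3\vert 0),(2\vert 1),(1\vert 2)\}$; and nilpotency forces $L_{\overline 0}$ to be abelian and each $\operatorname{ad}_x$, $x\in L_{\overline 0}$, to act nilpotently on $L_{\overline 1}$. Together with $\dim L^2=1$ this reduces the isomorphism types to a short list: the three-dimensional Heisenberg Lie algebra; essentially one $(2\vert 1)$-algebra (a central odd line would force $L^2_{\overline 1}=[L_{\overline 0},L_{\overline 1}]=0$, leaving only the case $L^2\subseteq L_{\overline 0}$, and then $\dim L^2=1$ pins the bracket); and a few $(1\vert 2)$-algebras, according as $L^2$ lies in $L_{\overline 0}$ or in $L_{\overline 1}$ and, in the first case, according to the rank of the induced symmetric form on $L_{\overline 1}$ --- here $\operatorname{char}\neq 2,3$ is what keeps the list short and rigid. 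For each algebra one computes $\dim\mathcal M(L)$ from a free presentation $L\cong F/R$ using $\mathcal M(L)\cong(R\cap F^2)/[R,F]$ (several of these algebras split off central direct summands, so the computation is brief) and observes that the value is always $1$ or $2$.

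For (ii) I would induct on $m+n\ge 4$, the sub-cases with $\dim L=3$ that appear being supplied by (i). The structural heart of the argument is the claim $Z(L)\subseteq L^2$. First, if $\dim L/L^2=1$ then $L$ is generated by a single homogeneous element $x$: from $L=\langle x\rangle+L^2$ one gets $L=\langle x\rangle+C^{k+1}(L)$ whenever $L=\langle x\rangle+C^{k}(L)$, hence $L=\langle x\rangle$ by nilpotency. In characteristic $\neq 2,3$ such a one-generated nilpotent Lie superalgebra satisfies $\dim L^2\le 1$ (an even generator gives an abelian algebra; for an odd generator the super Jacobi identity forces $3[x,[x,x]]=0$, so $L=\langle x\rangle\oplus\langle[x,x]\rangle$). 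Therefore, if a graded one-dimensional ideal $K\subseteq Z(L)$ met $L^2$ trivially, a graded complement $L'$ of $K$ with $L^2\subseteq L'$ would give $L\cong L'\oplus K$ with $(L')^2=L^2$ and $\dim L'/(L')^2=1$, so $L'$ would be one-generated with $\dim(L')^2\le 1$ --- impossible since $\dim(L')^2=m+n-2\ge 2$. Hence $Z(L)\subseteq L^2$. Since $L$ is nilpotent we may thus pick a graded one-dimensional ideal $K\subseteq Z(L)\cap L^2$, and when $n=1$ this $K$ is necessarily even, for a central odd line would be all of $L_{\overline 1}$ and would force $L^2_{\overline 1}=[L_{\overline 0},L_{\overline 1}]=0$, whence $Z(L)=0$.

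Finally, with $H=L/K$ one has $\dim H=m+n-1$, $\dim H^2=\dim H-2$, and $\dim(H/H^2)=2$, so Theorem \ref{th1.2}(i) applied to $K$ gives $\dim\mathcal M(L)+1\le\dim\mathcal M(H)+\dim\mathcal M(K)+\dim(H/H^2\otimes K/K^2)$, with $\dim(H/H^2\otimes K/K^2)=2$ and, by Theorem \ref{th1.1}, $\dim\mathcal M(K)=0$ if $K$ is even and $1$ if $K$ is odd. Bounding $\dim\mathcal M(H)$ by the inductive hypothesis when $\dim H\ge 4$, and by (i) when $\dim H=3$ (which is exactly the case $m+n=4$), and distinguishing the parity of $K$ while remembering that $n=1$ forces $K$ even, one checks directly that $\dim\mathcal M(L)\le m+2n-2$ in each case. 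I expect the real difficulty to lie in part (i): making the three-dimensional classification exhaustive --- it is easy to miss a degenerate isomorphism class, and the exclusion of characteristics $2$ and $3$ must be used in the right places --- and carrying out the associated Schur-multiplier computations cleanly. Once (i) is secured, the inductive step of (ii) is routine, the only delicate point being the base case $m+n=4$, where $H$ is three-dimensional and (i) is invoked.
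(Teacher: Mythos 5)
This statement is not proved in the paper at all: it is quoted verbatim from \cite[Theorem 5.4]{Nay1} as a preliminary, so there is no in-paper argument to compare yours against; I can only judge your reconstruction on its own terms (and against the standard argument in the cited source, which your part (ii) essentially reproduces).

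Your part (ii) is architecturally sound: showing that a one-dimensional graded central ideal $K$ complementing $L^2$ would force $L$ to split off a one-generated direct summand $L'$ with $\dim (L')^2=m+n-2\geq 2$ (impossible since a homogeneous generator gives $\dim(L')^2\leq 1$, using $3[x,[x,x]]=0$ in characteristic $\neq 3$), then choosing $K\subseteq Z(L)\cap L^2$ homogeneous and applying Theorem \ref{th1.2}(i) with $\dim(H/H^2\otimes K/K^2)=2$ and $\dim\mathcal M(K)\in\{0,1\}$ by Theorem \ref{th1.1}, does close the induction; I checked the arithmetic in all parity cases, including the base case $m+n=4$ where $\dim\mathcal M(H)\leq 2$ comes from (i). One step is garbled: in excluding an odd central $K$ when $n=1$ you conclude ``whence $Z(L)=0$,'' which is a non sequitur; the actual contradiction is that a central $L_{\overline 1}$ forces $L^2=L_{\overline 0}^2$, whose dimension is at most $m-2$, against the hypothesis $\dim L^2=m-1$. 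That is repairable.

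The genuine gap is part (i), which you yourself flag as the hard part but then do not carry out. The statement that $\dim\mathcal M(L)$ equals $1$ or $2$ for every three-dimensional case is exactly the content to be proved, and your text only asserts that the classification is ``a short list'' and that the free-presentation computations ``observe'' the right values; neither the list (in particular the $(1\vert 2)$ cases, where the isomorphism classes of the symmetric form $[\cdot,\cdot]\colon L_{\overline 1}\times L_{\overline 1}\to L_{\overline 0}$ depend on the ground field, though the multiplier dimension depends only on its rank) nor a single multiplier computation is actually exhibited. Since (i) is also the base case feeding the induction in (ii), the proposal as written is a correct plan rather than a proof: everything downstream is conditional on completing the dimension-three classification and the $\mathcal M(L)\cong (R\cap F^2)/[R,F]$ computations, in the style of the calculation the paper performs for $(2\vert 3)_{22}$ in Proposition \ref{pr22}.
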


\section{Main Result}
In this section, we classify the structure of all $(m\vert n)$-dimensional nilpotent Lie superalgebras with a derived subsuperalgebra of dimension $m+n-2$, for $m+n \geq 4$, $n \geq 1$, and $\gamma(L) \in {0,1,2}$. Furthermore, we demonstrate that all such Lie superalgebras are capable.

The following proposition provides the Schur multipliers of $(m\vert n)$-dimensional nilpotent Lie superalgebras with a derived subsuperalgebra of dimension $m+n-2$ for $4 \leq m+n \leq 5$ and $n \geq 1$.  
\begin{prop}\label{pr22}
Let $ L $ be an $ (m\vert n)$-dimensional nilpotent Lie superalgebra with  derived subalgebra of dimension $ m+n-2 $ such that  $ 4\leq m+n\leq 5$ and $ n\geq 1. $   Then we have the following  table.
 \begin{longtable}{cccc} 
 \caption{}\\
\multicolumn{3}{c}{}\\
\hline \multicolumn{1}{c}{\textsf{Name}} & \multicolumn{1}{c}{\textsf{Basis}}& \multicolumn{1}{c}{\textsf{Nonzero multiplication}} &   \multicolumn{1}{c}{\textbf{$\dim \mathcal{M}(L)$}}\\
\hline
\endhead
\hline \multicolumn{4}{r}{\small \itshape continued on the next page}
\endfoot
\endlastfoot

 $ (2 \vert 2)_{1} $&    $e_{1}, e_{2}, f_{1}, f_{2}$& $[f_{1}, f_{1}] =e_{1}, [f_{2}, f_{2}] =e_{2}$ & $ 1$\\
 \\
$ (2 \vert 2)_{4} $&    $e_{1}, e_{2}, f_{1}, f_{2}$& $[f_{1}, f_{2}] =e_{1}, [f_{2}, f_{2}] =e_{2}$ & $ 2$\\
\\
 $ (2 \vert 2)_{6} $&    $e_{1}, e_{2}, f_{1}, f_{2}$& $[e_{2}, f_{2}] =f_{1}, [f_{2}, f_{2}] =e_{1}$ & $2 $\\
 \\
$ (1 \vert 3)_{1} $&    $e_{1}, f_{1}, f_{2}, f_{3}$& $[e_{1}, f_{2}] =f_{1}, [e_{1}, f_{3}] =f_{2}$ & $ 3$\\
\\
$ (1\vert 4)_{7} $&$e_{1}, f_{1}, ..., f_{4}$&$[e_{1}, f_{2}] =f_{1}, [e_{1}, f_{3}] =f_{2},  [e_{1}, f_{4}] =f_{3}$  & $ 3 $\\
\\
$ (3\vert 2)_{5}$&$e_{1}, e_{2}, e_{3},f_{1}, f_{2}$&$[f_{1}, f_{1}] =e_{2}, [f_{1}, f_{2}] =e_{1},  [f_{2}, f_{2}] =e_{3}$  &$ 2$ \\
\\
$ (3\vert 2)_{13}$&$e_{1}, e_{2}, e_{3},f_{1}, f_{2}$&$[e_{1}, e_{2}] =e_{3}, [e_{1}, f_{2}] =f_{1},  [f_{1}, f_{2}] =e_{3}$ & $ 3 $\\&&$ [f_{2},f_{2}]=2e_{2}$ \\
\\
$ (2\vert 3)_{18} $ & $e_{1}, e_{2}, f_{1},f_{2}, f_{3}$& $[e_{1}, f_{3}] =f_{1}, [e_{2}, f_{2}] =f_{1},  [f_{2}, f_{2}] =2e_{1}$ & $ 2 $\\&&$ [f_{2},f_{3}]=-e_{2}$\\
\\
$ (2\vert 3)_{19} $ & $e_{1}, e_{2}, f_{1},f_{2}, f_{3}$& $[e_{1}, f_{3}] =f_{1}, [e_{2}, f_{2}] =f_{1},  [f_{2}, f_{3}] =-e_{1}$ & $ 3 $\\&&$ [f_{3},f_{3}]=2e_{2}$\\
\\
$ (2\vert 3)_{22} $ & $e_{1}, e_{2}, f_{1},f_{2}, f_{3}$& $[e_{1}, f_{2}] =f_{1}, [e_{1}, f_{3}] =f_{2},  [f_{3}, f_{3}] =e_{2}$ & $ 3 $\\
\\
$ (2\vert 3)_{23} $ & $e_{1}, e_{2}, f_{1},f_{2}, f_{3}$& $[e_{1}, f_{2}] =f_{1}, [e_{1}, f_{3}] =f_{2},  [f_{1}, f_{3}] =-e_{2}$ & $ 2 $\\&&$ [f_{2},f_{2}]=e_{2}$\\
  \hline
  \label{tab1}
\end{longtable}   
\end{prop}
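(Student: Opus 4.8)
The plan is to proceed by a complete case analysis over the finitely many isomorphism types of $(m\vert n)$-dimensional nilpotent Lie superalgebras with $\dim L^2 = m+n-2$ and $4 \le m+n \le 5$, $n \ge 1$. First I would invoke the existing classification of nilpotent Lie superalgebras in these small dimensions (the source papers the authors cite for the naming scheme $(m\vert n)_k$), so that the task reduces to: for each algebra with derived subsuperalgebra of the prescribed codimension $2$, compute $\dim \mathcal{M}(L)$ and check it against the table. The algebras listed in the table are exactly those for which $\gamma(L) = m+2n-2-\dim\mathcal{M}(L) \in \{0,1,2\}$; all other algebras in the classification with $\dim L^2 = m+n-2$ must be shown to have $\gamma(L) \ge 3$, i.e. $\dim\mathcal{M}(L) \le m+2n-5$, and hence be excluded.

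The core computational engine would be Theorem~\ref{th1.2}: choosing a well-placed central graded ideal $K$ (typically a one- or two-dimensional piece of $Z(L) \cap L^2$, or a complement giving an abelian quotient) yields the upper bound
\begin{equation*}
\dim \mathcal{M}(L) \le \dim \mathcal{M}(H) + \dim \mathcal{M}(K) + \dim(H/H^2 \otimes K/K^2) - \dim(L^2 \cap K),
\end{equation*}
where $H = L/K$, and where $\dim \mathcal{M}(K)$ is computed from Theorem~\ref{th1.1} since $K$ is abelian. For the matching lower bound one exhibits enough independent elements of $\mathcal{M}(L)$ directly: present $L$ as $F/R$ with $F$ free on the given generators, write down explicit cocycles spanning $R \cap F^2/[R,F]$, or equivalently use a stem cover / free central extension and count. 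For several of the listed algebras (e.g. $(1\vert 3)_1$, $(1\vert 4)_7$) the Heisenberg-type or filiform structure makes $\mathcal{M}(L)$ directly computable, and one can also lean on the sharp bound $\dim\mathcal{M}(L)\le m+2n-2$ from Theorem~\ref{tb}(ii) to pin down the value from above once the lower bound is in hand.

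Concretely I would organize the proof dimension by dimension: $(2\vert 2)$, $(1\vert 3)$, $(1\vert 4)$, $(3\vert 2)$, $(2\vert 3)$, $(4\vert 1)$, $(3\vert 1)$ when relevant, handling each isomorphism class in turn, discarding those with $\dim L^2 \ne m+n-2$, and for the survivors computing $\dim\mathcal{M}(L)$. In each case the abelianization $L/L^2$ is $2$-dimensional, so $H/H^2 \otimes K/K^2$ has small, controllable dimension, keeping the arithmetic in Theorem~\ref{th1.2} tight; the degree (even/odd) bookkeeping in the tensor product $H/H^2 \otimes K/K^2$ must respect the $\mathbb{Z}_2$-grading, which is where most of the care lies.

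The main obstacle I anticipate is not any single hard computation but rather the bookkeeping: ensuring the list of isomorphism classes used is complete and correctly matched to the cited classification, and — for each algebra — making the upper bound from Theorem~\ref{th1.2} and the explicitly constructed lower bound meet exactly. The delicate cases are those where the derived subsuperalgebra is not central (so that $Z(L)\cap L^2$ is a proper subspace of $L^2$ and the choice of $K$ is forced to be small), and the cases with nontrivial structure constants like $[f_2,f_2]=2e_2$ in $(3\vert2)_{13}$ or the $(2\vert3)_{18},(2\vert3)_{19}$ family, where one must check that the cocycle count is insensitive to the scalar normalization. A secondary point is verifying that the excluded algebras genuinely have $\gamma(L)\ge 3$; here Theorem~\ref{th1.2}(i) with a carefully chosen $K$ should give $\dim\mathcal{M}(L) \le m+2n-5$ in each case, but one must rule out the possibility that some algebra was overlooked.
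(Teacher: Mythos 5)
Your computational core is the right one and is essentially what the paper does: invoke the low-dimensional classification behind the $(m\vert n)_k$ naming scheme, and for each algebra with $\dim L^{2}=m+n-2$ compute $\dim \mathcal{M}(L)$ from a free presentation by adjoining a central generator $s_i$ to every bracket, imposing the graded Jacobi identities on all triples, and absorbing redundant generators by a change of basis (the paper carries this out explicitly only for $(2\vert 3)_{22}$ and declares the remaining cases ``similar''). Your additional device of getting the upper bound from Theorem \ref{th1.2} with a well-chosen central graded ideal $K$ and meeting it with explicit cocycles is a legitimate alternative packaging of the same computation, not a genuinely different route.

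There is, however, one concrete error in your reading of what must be proved. Table \ref{tab1} is \emph{not} the list of algebras with $\gamma(L)\in\{0,1,2\}$; it is intended as the complete list of $(m\vert n)$-dimensional nilpotent Lie superalgebras with $\dim L^{2}=m+n-2$, $4\leq m+n\leq 5$, $n\geq 1$, together with their multiplier dimensions, irrespective of the value of $\gamma$. For instance $(2\vert 2)_{1}$ has $m+2n-2=4$ and $\dim \mathcal{M}(L)=1$, hence $\gamma=3$, and $(1\vert 4)_{7}$ has $m+2n-2=7$ and $\dim \mathcal{M}(L)=3$, hence $\gamma=4$; both nevertheless belong to the table. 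If you executed your plan as written, discarding every algebra with $\gamma(L)\geq 3$, you would delete correct rows, and the downstream arguments that read the full table (the base case of Theorem \ref{th2.3}, Proposition \ref{pr24}, and the Main Theorem, which is where the selection by $\gamma$ actually takes place) would no longer be supported. The fix is simply to drop the filtering step: compute $\dim \mathcal{M}(L)$ for \emph{every} isomorphism class with derived subsuperalgebra of codimension $2$ in these dimensions and record all of them. Your remaining worry, completeness of the underlying list of isomorphism classes, is real, but it is inherited from the cited classification and is not addressed by the paper's own proof either.
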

 \begin{proof}
  Let $ L\cong (2\vert 3)_{22} =\langle e_{1}, e_{2}, f_{1},f_{2}, f_{3}\mid  [e_{1}, f_{2}] =f_{1}, [e_{1}, f_{3}] =f_{2},  [f_{3}, f_{3}] =e_{2} \rangle $.  By  using the method of  in \cite{Nay1}, we  compute  $ \dim \mathcal{M}((2\vert 3)_{22}) $. Start with
   \begin{align*}
& [e_{1},e_{2}]=s_{1},&&  [e_{1},f_{1}]=s_{2}, &   & [e_{1},f_{2}]=f_{1}+s_{3},    \cr
  &[e_{1},f_{3}]=f_{2}+s_{4},&& [e_{2},f_{1}]=s_{5},&& [e_{2},f_{1}]=s_{6},\cr
    &     [e_{2},f_{3}]=s_{7},    &  &[f_{1},f_{2}]=s_{8},&& [f_{1},f_{3}]=s_{9}, \cr
    &[f_{1},f_{1}]=s_{10},  && [f_{2},f_{3}]=s_{11},  &&[f_{2},f_{2}]=s_{12},\cr
  &[f_{3},f_{3}]=e_2+s_{13},   && &&
\end{align*}
where the set $ \lbrace s_{1},..., s_{13} \rbrace$ generates $ \mathcal{M}(L) $. The graded Jacobi identity on all possible triples gives
\begin{align*}
&s_{1}=[e_{1},[f_{3},f_{3}]]=[f_{3},[e_{1},f_{3}]]-[f_{3},[f_{3},e_{1}]]=2s_{11},\cr
&s_{5}=[e_{2},[e_{1},f_{2}]]=-[f_{2},[e_{2},e_{1}]]-[e_{1},[f_{2},e_{2}]]=0,\cr
&s_{6}=[f_{3},[e_{2},e_{1}]]=[e_{1},[f_{3},e_{2}]]+[x_{2},[x_{5},x_{1}]]=0,\cr
&s_{7}=[f_{3},[f_{3},f_{3}]]=-[f_{3},[f_{3},f_{3}]]-[f_{3},[f_{3},f_{3}]]=-2s_7,\cr
&s_{8}=[f_{1},[e_{1},f_{3}]]=[f_{3},[f_{1},e_{1}]]+[e_{1},[f_{3},f_{1}]]=0,\cr
&s_{9}=[f_{1},[e_{1},f_{3}]]=[f_{3},[f_{1},e_{1}]]+[e_{1},[f_{3},f_{1}]]=0,\cr
&s_{10}=[f_{1},[e_{1},f_{2}]]=[f_{2},[f_{1},e_{1}]]+[e_{1},[f_{2},f_{1}]]=0.
\end{align*}

Let $f^{\prime}{1}=f{1}+s_{3}$, $f^{\prime}{2}=f{2}+s_{4}$, and $e^{\prime}{2}=e{2}+s_{13}$.
A change of variables allows $ s_{3}=s_4=s_{13}=0 $. Hence $ \mathcal{M}((2\vert 3){22}) =\langle s{1}, s_{2}, s_{9} \rangle$, and thus $\dim \mathcal{M}((2\vert 3)_{22})=3$.

By a similar method, the Schur multiplier of other Lie superalgebras in Table \ref{tab1} is obtained.
\end{proof}

In the remainder of the paper, we consider all $ (m\vert n) $-dimensional nilpotent Lie superalgebras $ L $ with the derived subsuperalgebra of dimension $ m+n-2$ such that $ m+n\geq 4$ and $ n\geq 1. $

Here's the revised draft:

```latex
\begin{lem}\label{lem1}
Let $ L $ be an $(m\vert n)$-dimensional nilpotent Lie superalgebra with the Schur multiplier of dimension $m+2n-2-i$ for all $i\geq 0$, and let $K\subseteq L^{2}\cap Z(L)$ be an $(1,0)$-dimensional ideal. Then $\dim (L/ K)^{2}=m+n-3$, and
\begin{itemize}
\item[(i)] if $m+n=4$, then $\dim \mathcal{M}(L/K)$ is equal to $1$ or $2$,
\item[(ii)] if $m+n\geq 5$, then $m+2n-2-(i+1)\leq \dim \mathcal{M}(L/K)\leq m+2n-3$ for all $i\geq 0$.
\end{itemize}
\end{lem}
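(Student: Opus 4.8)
The plan is to combine the dimension bound from Theorem \ref{tb} with the inequality of Theorem \ref{th1.2} applied to the central extension $L \to L/K$, treating $K$ as the $(1,0)$-dimensional central graded ideal contained in $L^2$. First I would establish the dimension of the derived subsuperalgebra of the quotient: since $K \subseteq L^2$ and $\dim K = 1$, we have $(L/K)^2 = L^2/K$, so $\dim (L/K)^2 = (m+n-2) - 1 = m+n-3$. Thus $L/K$ is an $(m'\vert n')$-dimensional nilpotent Lie superalgebra with $m'+n' = m+n-1$ and $\dim (L/K)^2 = (m'+n') - 2$, which is exactly the hypothesis needed to invoke Theorem \ref{tb} for $L/K$. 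Note that $K \subseteq L^2 \cap Z(L)$ with $\dim K = (1,0)$ forces $n' = n$ (the odd part is untouched) when the ideal is even, so $m' + 2n' - 2 = m + 2n - 3$; this gives the upper bounds in both (i) and (ii) directly from Theorem \ref{tb} (case $m+n=4$ gives $\dim(L/K)^2 = 1$, i.e. $m'+n' = 3$, yielding $\dim\mathcal{M}(L/K) \in \{1,2\}$; case $m+n \geq 5$ gives $m'+n' \geq 4$, yielding the bound $m+2n-3$).

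For the lower bound in part (ii), I would apply Theorem \ref{th1.2}(i) with $H = L/K$: this gives
\[
\dim \mathcal{M}(L) + \dim(L^2 \cap K) \leq \dim \mathcal{M}(L/K) + \dim \mathcal{M}(K) + \dim\big( (L/K)/(L/K)^2 \otimes K/K^2 \big).
\]
Since $K \subseteq L^2$ we have $L^2 \cap K = K$, so $\dim(L^2\cap K) = 1$. Since $K$ is $(1,0)$-dimensional abelian, $\mathcal{M}(K) = 0$ by Theorem \ref{th1.1} (a $(1\vert 0)$-dimensional abelian Lie superalgebra has zero Schur multiplier: the formula gives $\frac12(1 + (0-1)) = 0$), and $K/K^2 = K$. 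The tensor term $\dim\big( (L/K)/(L/K)^2 \otimes K/K^2\big)$ equals the dimension of $(L/K)/(L/K)^2$ (as $K$ is one-dimensional), which needs a small argument — I would bound it using $\dim(L/K) - \dim(L/K)^2 = (m+n-1) - (m+n-3) = 2$, but one must be careful about the super-tensor-product grading and whether the abelianization picks up only even or the full dimension. Plugging in $\dim\mathcal{M}(L) = m+2n-2-i$ and rearranging yields $\dim\mathcal{M}(L/K) \geq (m+2n-2-i) + 1 - (\text{tensor term})$; the goal is to show the tensor term is at most $2$, giving $\dim\mathcal{M}(L/K) \geq m+2n-3-i = m+2n-2-(i+1)$.

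The main obstacle I anticipate is pinning down the exact value (or a sharp upper bound) of the super-tensor term $\dim\big((L/K)/(L/K)^2 \otimes K/K^2\big)$ in the graded sense, since in the super setting the tensor product of graded vector spaces has dimension computed componentwise and the relevant "dimension" entering Theorem \ref{th1.2} may refer to the total dimension $m'' + n''$ of the resulting graded space rather than a naive product of integers. Concretely, if $(L/K)/(L/K)^2$ has dimension $(a\vert b)$ with $a+b \leq 2$ and $K/K^2$ has dimension $(1\vert 0)$, then the tensor product has dimension $(a\vert b)$, so its total dimension is $a+b \leq 2$; I would need to verify that $a + b = (m+n-1) - (m+n-3) = 2$ indeed holds, i.e. that $K$ contributes to the derived subalgebra but the quotient's abelianization has the expected dimension — this is automatic since $(L/K)/(L/K)^2 \cong (L/K)/(L^2/K) \cong L/L^2$, which has dimension exactly $2$. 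Once this identification is made the arithmetic closes cleanly, and part (i) follows by the same reasoning as the upper bound argument, simply reading off Theorem \ref{tb}(i) for the $3$-dimensional quotient.
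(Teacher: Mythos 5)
Your proposal is correct and follows essentially the same route as the paper: compute $\dim(L/K)^2 = m+n-3$, invoke Theorem \ref{tb} for part (i) and for the upper bound in part (ii), and obtain the lower bound from Theorem \ref{th1.2} using $\dim\mathcal{M}(K)=0$ and $\dim(L^2\cap K)=1$. Your extra care in identifying the tensor term as $\dim\bigl((L/K)/(L/K)^2\bigr) = \dim(L/L^2) = 2$ is a detail the paper leaves implicit, but it closes the arithmetic in exactly the intended way.
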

\begin{proof}
Assume that $\dim L^{2}=m+n-2$ and $\dim L^{2} \cap K=1$. Thus, $\dim (L/ K)^{2}=m+n-3$.\\
(i) Since $\dim K=(1,0)$, we have $\dim L/K=(m-1, n)$ such that $m+n=3$, and by part (i) of Theorem \ref{tb}, $\dim \mathcal{M}(L/K)=1$ or $2$.\\
(ii) We know that $\dim \mathcal{M}(K)=0$ by Theorem \ref{th1.1}. It implies that $m+2n-(i+1)\leq \dim \mathcal{M}(L/K)$ for all $i\geq 0$, by Theorem \ref{th1.2}. On the other hand, $\dim \mathcal{M}(L/K)\leq m+2n-3$ by part (ii) of Theorem \ref{tb}. Hence the result follows.
\end{proof}
\begin{lem}\label{lem2}
Let $ L $ be an $(m\vert n)$-dimensional nilpotent Lie superalgebra with the Schur multiplier of dimension $m+2n-2-i$ for all $i\geq 0$, and let $K\subseteq L^{2}\cap Z(L)$ be an $(0,1)$-dimensional ideal. Then $\dim (L/ K)^{2}=m+n-3$, and
\begin{itemize}
\item[(i)] if $m+n=4$, then $\dim \mathcal{M}(L/K)$ is equal to $1$ or $2$,
\item[(ii)] if $m+n\geq 5$ and $n=1$, then $\dim \mathcal{M}(L/K)\leq m-2$,
\item[(iii)] if $m+n\geq 5$ and $n\geq 2$, then $m+2n-2-(i+2)\leq \dim \mathcal{M}(L/K) \leq m+2n-4$ for all $i\geq 0$.
\end{itemize}
\end{lem}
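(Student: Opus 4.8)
The plan is to follow the scheme of the proof of Lemma~\ref{lem1}, the essential new point being that a $(0,1)$-dimensional abelian Lie superalgebra has a \emph{one}-dimensional rather than zero-dimensional Schur multiplier. As there, we start from the standing hypothesis $\dim L^{2}=m+n-2$; since $K$ is $(0,1)$-dimensional and $K\subseteq L^{2}$, we have $\dim(L^{2}\cap K)=1$, hence $\dim(L/K)^{2}=m+n-3$, while $L/K$ has dimension $(m,n-1)$ and is non-abelian for $m+n\geq 4$. For (i), when $m+n=4$ the quotient $L/K$ has total dimension $3$ with $\dim(L/K)^{2}=1=3-2$, so part (i) of Theorem~\ref{tb} applied to $L/K$ gives $\dim\mathcal{M}(L/K)\in\{1,2\}$.

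For (ii) I would show that the hypotheses can never be met, so that the assertion is vacuously true. If $n=1$, then necessarily $K=L_{\overline{1}}$; since $K\subseteq Z(L)$ this forces $[L_{\overline{0}},L_{\overline{1}}]=0$ and $[L_{\overline{1}},L_{\overline{1}}]=[K,K]\subseteq[K,L]=0$, so that $L^{2}=[L_{\overline{0}},L_{\overline{0}}]\subseteq L_{\overline{0}}$. But then $K\subseteq L^{2}\cap L_{\overline{1}}\subseteq L_{\overline{0}}\cap L_{\overline{1}}=0$, contradicting $\dim K=1$. Hence no superalgebra $L$ satisfying the hypotheses of (ii) exists, and $\dim\mathcal{M}(L/K)\leq m-2$ holds trivially.

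For (iii), the upper bound is immediate: $L/K$ is $(m,n-1)$-dimensional with $m+(n-1)\geq 4$, $n-1\geq 1$ and $\dim(L/K)^{2}=(m+n-1)-2$, so part (ii) of Theorem~\ref{tb} gives $\dim\mathcal{M}(L/K)\leq m+2(n-1)-2=m+2n-4$. For the lower bound I would apply Theorem~\ref{th1.2}(i) with $H=L/K$: since $K$ is abelian, Theorem~\ref{th1.1} gives $\dim\mathcal{M}(K)=\frac{1}{2}(1+1)=1$, while $K^{2}=0$ yields $\dim K/K^{2}=1$ and $\dim H/H^{2}=(m+n-1)-(m+n-3)=2$, so $\dim(H/H^{2}\otimes K/K^{2})=2$. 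Substituting $\dim\mathcal{M}(L)=m+2n-2-i$ and $\dim(L^{2}\cap K)=1$ into the inequality of Theorem~\ref{th1.2}(i) gives $m+2n-1-i\leq\dim\mathcal{M}(L/K)+3$, that is $\dim\mathcal{M}(L/K)\geq m+2n-2-(i+2)$, as claimed.

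All of this is routine once Theorems~\ref{th1.1}, \ref{th1.2} and \ref{tb} are available; the only genuinely new ingredient — and the reason the lower bound in (iii) drops two units rather than one, unlike in Lemma~\ref{lem1} — is the equality $\dim\mathcal{M}(K)=1$ for the $(0,1)$-dimensional central ideal. The one place requiring a little care is (ii): there is no clean bound $\dim\mathcal{M}(A)\leq\dim A-2$ for a general $m$-dimensional nilpotent Lie algebra $A$ with $\dim A^{2}=m-2$, so the claim really has to be obtained by excluding the existence of such $L$ altogether.
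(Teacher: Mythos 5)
Your proposal is correct, and for parts (i) and (iii) it coincides with what the paper intends when it says these parts ``are obtained by a similar method to Lemma \ref{lem1}'': you apply Theorem \ref{tb}(i) to the three-dimensional non-abelian quotient for (i), and for (iii) you combine Theorem \ref{th1.2}(i) (with $\dim\mathcal{M}(K)=1$ from Theorem \ref{th1.1} and a tensor term of dimension $2$, since $\dim (L/K)/(L/K)^{2}=2$ and $K^{2}=0$) with the upper bound from Theorem \ref{tb}(ii); your arithmetic matches the stated bounds exactly. Where you genuinely diverge is part (ii): the paper notes that $L/K$ has dimension $(m\vert 0)$, hence is an ordinary nilpotent Lie algebra, and invokes \cite[Theorem 3.1]{Nir}, whereas you show the hypotheses of (ii) can never be met --- if $n=1$ then $K=L_{\overline{1}}$ is central, so $L^{2}=[L_{\overline{0}},L_{\overline{0}}]\subseteq L_{\overline{0}}$ and $K\subseteq L^{2}\cap L_{\overline{1}}=0$, contradicting $\dim K=1$ --- so the assertion is vacuously true. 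Your vacuity argument is valid and is in fact the sharper structural observation. One small correction to your closing remark: a bound $\dim\mathcal{M}(A)\leq \dim A-2$ \emph{is} available for an $m$-dimensional nilpotent Lie algebra $A$ with $\dim A^{2}=m-2\geq 2$, because the Niroomand--Russo inequality gives $\dim\mathcal{M}(A)\leq m-1$ with equality only for the Heisenberg algebra plus an abelian summand, whose derived subalgebra is one-dimensional; so the paper's route for (ii) does go through (for $m\geq 4$), it is simply a bound about an object that, as you observe, does not exist.
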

\begin{proof}
The parts (i) and (iii) are obtained by a similar method to Lemma \ref{lem1}. It is sufficient to prove part (ii).\\
(ii) Since $m+n\geq 5$ and $n=1$, we have $\dim (L/ K)=(m,0)$. So, $L/K$ is a Lie algebra, and $\dim \mathcal{M}(L/K)\leq m-2$ by \cite[Theorem 3.1]{Nir}.
\end{proof}
\begin{thm}\label{th2.3}
Let $ L $ be an $(m\vert n)$-dimensional nilpotent Lie superalgebra. Then $\dim \mathcal{M}(L)< m+2n-3$. 
\end{thm}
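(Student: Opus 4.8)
We work under the standing hypotheses of this section, so $\dim L^{2}=m+n-2$, $m+n\ge 4$ and $n\ge 1$, and we in fact prove the slightly stronger bound $\dim\mathcal M(L)\le m+2n-4$. The plan is an induction on $d:=m+n$, the inductive step being a reduction along a one-dimensional graded central ideal contained in $L^{2}$, carried out by Lemmas \ref{lem1} and \ref{lem2}.

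For the base case $d=4$ one has $(m,n)\in\{(3,1),(2,2),(1,3)\}$: the value $m=0$ is excluded, since then $L^{2}=[L_{\overline{1}},L_{\overline{1}}]\subseteq L_{\overline{0}}=0$ would make $L$ abelian, contradicting $\dim L^{2}=2$. For each of these three dimensions one invokes the classification of nilpotent Lie superalgebras whose derived subsuperalgebra has dimension $m+n-2=2$ and computes $\dim\mathcal M(L)$ directly, exactly as in the proof of Proposition \ref{pr22}, checking that it never exceeds $m+2n-4$, that is $1$, $2$ and $3$ respectively.

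Now let $d\ge 5$ and put $i=\gamma(L)$, so $\dim\mathcal M(L)=m+2n-2-i$ with $i\ge 0$ by Theorem \ref{tb}(ii); assume for contradiction that $i\le 1$. Since $L$ is nilpotent and $L^{2}\ne 0$, we have $L^{2}\cap Z(L)\ne 0$, so we may choose a one-dimensional graded (hence homogeneous) ideal $K\subseteq L^{2}\cap Z(L)$; then $L/K$ has dimension $d-1\ge 4$ and $\dim(L/K)^{2}=d-3\ge 2$, so $L/K$ is again a non-abelian nilpotent Lie superalgebra whose derived subsuperalgebra has the maximal corank $2$. If $K$ is $(1,0)$-dimensional, then $L/K$ has dimension $(m-1,n)$ and Lemma \ref{lem1}(ii) gives $\dim\mathcal M(L/K)\ge m+2n-2-(i+1)\ge m+2n-4$, whereas the induction hypothesis applied to $L/K$ gives $\dim\mathcal M(L/K)\le (m-1)+2n-4=m+2n-5$, a contradiction. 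If $K$ is $(0,1)$-dimensional, then necessarily $n\ge 2$: indeed, when $n=1$ nilpotency forces $L_{\overline{0}}$ to act nilpotently, hence trivially, on the one-dimensional space $L_{\overline{1}}$, so $(L^{2})_{\overline{1}}=[L_{\overline{0}},L_{\overline{1}}]=0$ and $L^{2}$, and a fortiori $L^{2}\cap Z(L)$, contains no nonzero odd element. With $n\ge 2$, $L/K$ has dimension $(m,n-1)$ and Lemma \ref{lem2}(iii) gives $\dim\mathcal M(L/K)\ge m+2n-2-(i+2)\ge m+2n-5$, whereas the induction hypothesis gives $\dim\mathcal M(L/K)\le m+2(n-1)-4=m+2n-6$, again a contradiction. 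Hence $i\ge 2$, i.e.\ $\dim\mathcal M(L)\le m+2n-4<m+2n-3$, completing the induction.

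The heart of the matter is the base case $d=4$, which depends on the existing low-dimensional classification together with routine Schur-multiplier computations; the only structural subtlety in the inductive step is the observation that $L^{2}$ can carry an odd central ideal only when $n\ge 2$, which is exactly what guarantees that the quotient $L/K$ remains inside the class (namely $n\ge 1$, derived subsuperalgebra of maximal corank $2$) to which the induction hypothesis and Lemmas \ref{lem1} and \ref{lem2} apply.
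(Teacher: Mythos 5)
Your proof is correct and follows essentially the same route as the paper's: induction on $m+n$, with base case $m+n=4$ read off from Table \ref{tab1} and the inductive step performed by passing to $L/K$ for a one-dimensional graded central ideal $K\subseteq L^{2}\cap Z(L)$ and playing the bounds of Theorem \ref{th1.2} (via Lemmas \ref{lem1} and \ref{lem2}) against the induction hypothesis. Your write-up is in fact tighter at exactly the points the paper glosses over — you apply the induction hypothesis to the correct dimensions $(m-1\vert n)$ resp.\ $(m\vert n-1)$ of the quotient, and you justify that an odd ideal $K$ forces $n\ge 2$ — which lets you reach $\dim\mathcal{M}(L)\le m+2n-4$ in one pass rather than via the paper's ``by a similar method'' second iteration.
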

\begin{proof}
We proceed by induction on $m+n$. Let $m+n=4$. By looking at Table \ref{tab1}, there is no $(m\vert n)$-Lie superalgebra such that $m+n=4$ and $n\geq 1$ with $\dim \mathcal{M}(L)<m+2n-2$. Let $m+n\geq 5$ and $K$ be a central ideal of $L$ such that $\dim K=1$ and $K\subseteq Z(L)\cap L^{2}$. By Theorem \ref{th1.2}, we have
\begin{align*}
1+\dim (\mathcal{M}(L))\leq \dim (\mathcal{M}(L/K))+\dim (\mathcal{M}(K))+\dim (L/L^{2}\otimes K)&
\end{align*}
If $K\subseteq L_{\overline{0}}$, then $\dim (\mathcal{M}(K))=0$ by Theorem \ref{th1.1}. Also, we have $\dim (\mathcal{M}(L/K))<m+2n-3$ by the induction hypothesis. So, $\dim \mathcal{M}(L)< (m+2n-3)-1+\dim L/L^{2}$ by Theorem \ref{th1.2}.\\
If $K\subseteq L_{\overline{1}}$, then $\dim \mathcal{M}(K)=1$ by Theorem \ref{th1.1}. Hence
\begin{align*}
\dim \mathcal{M}(L)< (m+2n-4)+\dim L/L^{2}
\end{align*}
Thus $\dim \mathcal{M}(L)< m+2n-2$. Suppose that $\dim \mathcal{M}(L)= m+2n-3$. By a similar method, we can see $\dim \mathcal{M}(L)< m+2n-3$. The result follows.
\end{proof}

Here's the revised version:

\begin{prop}\label{pr24}
No $(m\vert n)$-dimensional nilpotent Lie superalgebras exist with a derived subsuperalgebra of dimension $4$, and $\dim \mathcal{M}(L) = m + 2n - 4$ when $m + n = 6$ and $n \geq 1$.
\end{prop}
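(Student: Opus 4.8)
The plan is to argue by contradiction along the lines already established in Theorem \ref{th2.3}, but now tracking the quotient more carefully. Suppose $L$ is an $(m\vert n)$-dimensional nilpotent Lie superalgebra with $\dim L^2 = 4$, $m+n = 6$, $n \geq 1$, and $\dim \mathcal{M}(L) = m+2n-4$; that is, $\gamma(L) = 2$ in the notation of the introduction. Since $m+n-2 = 4$, the derived subsuperalgebra has maximal dimension, so Theorem \ref{tb}(ii) and the lemmas apply. The idea is to pick a one-dimensional graded central ideal $K \subseteq L^2 \cap Z(L)$ (which is nonempty since $L$ is nilpotent and $L^2 \neq 0$), pass to $L/K$, and derive a numerical contradiction from the bounds in Lemmas \ref{lem1} and \ref{lem2} together with Theorem \ref{th2.3} applied to $L/K$.

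First I would split into the two cases $K \subseteq L_{\overline{0}}$ and $K \subseteq L_{\overline{1}}$. In the even case, $L/K$ has dimension $(m-1\vert n)$ with $(m-1)+n = 5$ and derived subsuperalgebra of dimension $3$; by Lemma \ref{lem1}(ii) with $i = 2$ we get $m+2n-5 \leq \dim \mathcal{M}(L/K) \leq m+2n-5$, pinning $\dim \mathcal{M}(L/K) = m+2n-5 = (m-1)+2n-4$, while Theorem \ref{th2.3} forces $\dim \mathcal{M}(L/K) < (m-1)+2n-3 = m+2n-4$, which is consistent — so this case requires a finer argument, most likely invoking Theorem \ref{th1.2}(i) with the tensor term $\dim(L/L^2 \otimes K/K^2)$ computed exactly (note $\dim L/L^2 = (m+n)-4 = 2$, so the even tensor contributes $2$) and checking that equality in the Schur-multiplier inequality forces $L/K$ to lie in the classification of Proposition \ref{pr22}, none of whose entries with the right parameters has the needed multiplier dimension. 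In the odd case, $K \subseteq L_{\overline{1}}$, $L/K$ has dimension $(m\vert n-1)$ with $m + (n-1) = 5$; if $n = 1$ then $L/K$ is an $m$-dimensional Lie algebra with $m = 5$ and $\dim(L/K)^2 = 3$, and Lemma \ref{lem2}(ii) gives $\dim \mathcal{M}(L/K) \leq m-2 = 3$, which combined with the inequality $1 + (m+2n-4) \leq \dim \mathcal{M}(L/K) + 1 + \dim(L/L^2 \otimes K)$ and the odd tensor dimension (here $\dim L_{\overline{1}}/(L_{\overline{1}} \cap L^2)$, at most $n = 1$) should overshoot the allowed bound; if $n \geq 2$, Lemma \ref{lem2}(iii) gives the two-sided bound and a similar count applies.

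The cleanest route is probably to feed $K$ into Theorem \ref{th1.2}(i), namely $\dim \mathcal{M}(L) + \dim(L^2 \cap K) \leq \dim \mathcal{M}(L/K) + \dim \mathcal{M}(K) + \dim(L/L^2 \otimes K/K^2)$, compute each right-hand term exactly in each of the finitely many sub-cases (keeping careful track of the $\mathbb{Z}_2$-grading in the tensor product, since $X/X^2 \otimes K/K^2$ for superalgebras splits into even and odd pieces), and observe that the resulting inequality either is violated outright or forces equality, which by Theorem \ref{6} then forces $K \subseteq Z^*(L)$ and a rigid structure on $L/K$ that can be matched against Proposition \ref{pr22}. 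I expect the main obstacle to be the borderline even case $K \subseteq L_{\overline{0}}$ with $L/K$ of dimension $(m-1\vert n)$: there the crude dimension inequalities are consistent, so one must extract the equality condition in Theorem \ref{th1.2}, identify $L/K$ explicitly among the superalgebras listed in Table \ref{tab1}, and then check that no such $L$ can have $L/K$ equal to any of them — equivalently, that lifting any table entry by a one-dimensional even central extension inside the derived subalgebra cannot produce a superalgebra with $\dim \mathcal{M} = m+2n-4$. Handling that will require the explicit multiplication tables and a short computation with the Hochschild–Serre-type five-term exact sequence for Lie superalgebras, exactly as in the proof of Proposition \ref{pr22}.
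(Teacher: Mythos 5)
Your strategy coincides with the paper's: choose a one-dimensional graded central ideal $K\subseteq L^2\cap Z(L)$, split on the parity of $K$, use Lemmas \ref{lem1}/\ref{lem2} and Theorem \ref{th1.2} to pin down $\dim\mathcal{M}(L/K)$, locate $L/K$ in Table \ref{tab1}, and then analyse the possible lifts. The problem is that the decisive step is announced but never carried out, and the shortcut you float in its place is false. In the borderline case $(m\vert n)=(4\vert 2)$ with $K\subseteq L_{\overline{0}}$, the bounds force $\dim\mathcal{M}(L/K)=3$ with $\dim(L/K)^2=3$, and Table \ref{tab1} \emph{does} contain a qualifying entry: $(3\vert 2)_{13}$ has Schur multiplier of dimension exactly $3$. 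So your claim that ``none of whose entries with the right parameters has the needed multiplier dimension'' is incorrect, and the case cannot be closed at the level of the table. One is forced to do what the paper does: write $L$ as a central extension of $(3\vert 2)_{13}$ by $K=\langle e_4\rangle$ with undetermined structure constants $\alpha_1,\dots,\alpha_6$, normalize $\alpha_1=\alpha_6=0$ by a change of basis, impose the graded Jacobi identities to get $\alpha_2=\alpha_5$ and $\alpha_3=0$, use $\dim L^2=4$ to force $\alpha_2\neq 0$ or $\alpha_4\neq 0$, and then compute $\dim\mathcal{M}(L)\leq 3<4$ by the generator-and-relation method of Proposition \ref{pr22}. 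Your final paragraph correctly identifies that such a computation is needed, but deferring it with ``I expect'' and ``will require'' leaves the proof without its core content; the numerical inequalities alone do not yield a contradiction here, as you yourself observe when you note the crude bounds are consistent.

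Two smaller points. First, the upper bound $\dim\mathcal{M}(L/K)\leq m+2n-5$ that pins the quotient's multiplier does not come from Lemma \ref{lem1}(ii) (which only gives $\leq m+2n-3$); it comes from applying Theorem \ref{th2.3} to $L/K$, which has superdimension $(m-1\vert n)$. Second, your treatment of the remaining superdimensions $(5\vert 1)$, $(3\vert 3)$, $(2\vert 4)$, $(1\vert 5)$, $(0\vert 6)$ is as schematic as the paper's own (``by a similar method''), so no complaint there beyond noting that each requires the same kind of case check, e.g.\ for $K\subseteq L_{\overline{1}}$ in the $(4\vert 2)$ case the relevant observation is simply that Table \ref{tab1} contains no admissible quotient of superdimension $(4\vert 1)$.
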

\begin{proof}
Given $m + n = 6$ and $n \geq 1$, the superalgebra $(m\vert n)$ can be one of the following: $(5\vert 1)$, $(4\vert 2)$, $(3\vert 3)$, $(2\vert 4)$, $(1\vert 5)$, or $(0\vert 6)$. Assuming $K \subseteq L^{2} \cap Z(L)$ and $(m\vert n) = (4\vert 2)$, if $K \subseteq L_{\overline{0}}$, then $\dim L/K = (3\vert 2)$. By Lemma \ref{lem1}, $\dim (L/K)^{2} = 3$, and $\dim \mathcal{M}(L/K) \geq 3$. Thus, $L/K$ is isomorphic to a nilpotent Lie superalgebra of superdimension $(3\vert 2)_{13}$ as per Table \ref{tab1}. Let $K=\langle e_4 \rangle$ and 
\[ (3\vert 2)_{13} = \langle e_{1}, e_2, e_{3}, f_1, f_2\vert [e_{1}, e_{2}] =e_{3}, [e_{1}, f_{2}] =f_{1},  [f_{1}, f_{2}] =e_{3}, [f_{2}, f_{2}] =2e_{2}\rangle. 
\]
We obtain $L=\langle e_{1}, e_2, e_{3}, e_4, f_1, f_2 \rangle$ with brackets:
\begin{align*}
  &[e_{1},e_{2}]=e_{3}+\alpha_1 e_{4},   ~\quad     [e_{1},e_{3}]=\alpha_2 e_{4}, \quad  [e_{1},f_{1}]=0,    \cr
 &[e_{1},f_{2}]=f_{1}, ~\qquad\qquad [e_{2},e_{3}]=\alpha_3 e_{4},\quad [e_{2},f_{1}]=0,\cr
   &    [e_{2},f_{2}]=0, \qquad \quad\qquad  [e_{3}, f_{1}]=0, ~\qquad [e_{3}, f_{2}]=0, \cr
    &[f_{1}, f_{2}]=e_3+\alpha_{4} e_4,   ~\quad  [f_{1}, f_{1}]=\alpha_{5} e_4, \quad [f_{2}, f_{2}]=2e_2+\alpha_{6} e_4.
\end{align*}
Let $e^{\prime}_{3}=e_{3}+\alpha_1 e_{4}$ and  $e^{\prime}_{2}=e_2+1/2\alpha_{6} e_4$. A change of variables sets $\alpha_1=\alpha_6=0$. By graded Jacobi identities, we have
\begin{align*}
&\alpha_2 e_{4}=[e_{1},e_{3}]=[e_{1},[f_{1},f_{2}]]=[f_{2},[e_{1},f_{1}]]-[f_{1},[f_{2},e_{1}]]=\alpha_5 e_{4},\cr
&\alpha_3 e_{4}=[e_{2},e_{3}]=[e_{2},[f_{1},f_{2}]]=[f_{2},[e_{2},f_{1}]]-[f_{1},[f_{2},e_{2}]]=0.
\end{align*}
Thus, 
\[ L=\langle e_{1}, e_2, e_{3},e_{4}, f_1, f_2 \vert [e_{1}, e_{2}] =e_{3}, [e_{1}, e_{3}] =[f_1, f_1]=\alpha_{2}e_{4},  [e_{1}, f_{2}] =f_{1},
\]
 \[[f_1, f_{2}] =e_3+\alpha_4e_4,    [f_{2}, f_{2}] =2 e_{2}\rangle.
\]
Since $\dim L^{2}=4$, $\alpha_2$ or $\alpha_4$ must not be zero. By a similar method as in Proposition \ref{pr22}, $\dim \mathcal{M}(L)\leq 3$. Thus, there's no such Lie superalgebra of dimension $(4 \vert 2)$ with $\dim \mathcal{M}(L)=4$. If $K\subseteq L_{\overline{1}}$, then $\dim L/K=(4\vert 1)$. By Table \ref{tab1}, there's no such Lie algebra.

For $(m\vert n) = (5\vert 1)$, $(3\vert 3)$, $(2\vert 4)$, $(1\vert 5)$, or $(0\vert 6)$, there's no such nilpotent Lie superalgebra $L$ with derived subsuperalgebra of dimension $4$ such that $m+n=6$, $n\geq 1$, and $\dim \mathcal{M}(L)=m+2n-4$ by a similar method.   
 \end{proof}

\begin{thm}\label{th2.4}
For an $(m\vert n)$-dimensional nilpotent Lie superalgebra with $m+n\geq 6$ and $n\geq 1$, $\dim \mathcal{M}(L)< m+2n-4$. 
\end{thm}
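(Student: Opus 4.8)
The plan is to mimic the inductive structure used in Theorem~\ref{th2.3}, now taking the base case at $m+n=6$ rather than $m+n=4$, and to push the bound down by one more unit. Specifically, I would argue by induction on $m+n$, with the base case $m+n=6$, $n\geq 1$ supplied by two ingredients: first, that $\dim\mathcal{M}(L)<m+2n-3$ already holds by Theorem~\ref{th2.3}, so that the only remaining possibility to exclude is $\dim\mathcal{M}(L)=m+2n-4$; and second, that this remaining case is ruled out precisely by Proposition~\ref{pr24} when $\dim L^2=4$. One must also dispose of the cases $\dim L^2<m+n-2$ for $m+n=6$: here the derived subsuperalgebra has dimension at most $3$, and the classification of such low-dimensional derived-subalgebra superalgebras (together with Theorem~\ref{th1.2}(ii) comparing against the abelian bound) forces $\dim\mathcal{M}(L)$ well below $m+2n-4$. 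So the base case reduces, after bookkeeping on $\dim L^2$, to already-established results.

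For the inductive step, suppose $m+n\geq 7$, $n\geq 1$, and choose a one-dimensional central ideal $K\subseteq Z(L)\cap L^2$ (this is nonempty since $L$ is nilpotent non-abelian; if $L$ is abelian the statement is immediate from Theorem~\ref{th1.1}). Apply Theorem~\ref{th1.2}(i):
\begin{align*}
1+\dim\mathcal{M}(L)\leq \dim\mathcal{M}(L/K)+\dim\mathcal{M}(K)+\dim\bigl(L/L^2\otimes K\bigr).
\end{align*}
If $K\subseteq L_{\overline{0}}$, then $\dim\mathcal{M}(K)=0$ by Theorem~\ref{th1.1}, and $L/K$ is an $(m-1\vert n)$-dimensional nilpotent Lie superalgebra with $(m-1)+n\geq 6$ and $n\geq 1$, so the induction hypothesis gives $\dim\mathcal{M}(L/K)<(m-1)+2n-4=m+2n-5$; combined with $\dim(L/L^2\otimes K)=\dim L/L^2$ this yields $\dim\mathcal{M}(L)<m+2n-5-1+\dim L/L^2$. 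If instead $K\subseteq L_{\overline{1}}$, then $\dim\mathcal{M}(K)=1$ and $L/K$ has dimension $(m\vert n-1)$ with $m+(n-1)\geq 6$; if $n-1\geq 1$ the induction hypothesis applies directly, while if $n=1$ one uses instead that $L/K$ is then a Lie algebra of the appropriate dimension and invokes the Lie-algebra bound (as in Lemma~\ref{lem2}(ii)). In either subcase the arithmetic yields $\dim\mathcal{M}(L)<m+2n-4$ once one controls $\dim L/L^2$.

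The real obstacle is the same one implicit in the proof of Theorem~\ref{th2.3}: the crude inequality from Theorem~\ref{th1.2}(i) carries the term $\dim L/L^2$, which can be as large as $m+n-\dim L^2$, and this is too lossy unless $\dim L^2$ is large. So the heart of the argument is a case analysis on $d:=\dim L^2$. When $d=m+n-2$ (the maximal case under consideration), $\dim L/L^2=2$ and the estimates above close up with room to spare, except for the single borderline value $\dim\mathcal{M}(L)=m+2n-4$, which is exactly where one must feed in Lemma~\ref{lem1}, Lemma~\ref{lem2}, and Proposition~\ref{pr24} (refining the choice of $K$ so that $L/K$ lands among the classified superalgebras and checking none of them lifts to an $L$ with the forbidden multiplier dimension) — this mirrors how Proposition~\ref{pr24} handled $m+n=6$. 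When $d\leq m+n-3$, one instead wants a sharper bound on $\dim\mathcal{M}(L)$ in terms of $d$; here I would either cite the appropriate refinement of Theorem~\ref{th1.2}(ii) or run a secondary induction peeling off central elements of $L^2$, using that a smaller derived subalgebra forces a correspondingly smaller multiplier. Handling the boundary value $\dim\mathcal{M}(L)=m+2n-4$ uniformly across all the relevant $(m\vert n)$ — rather than one superdimension at a time — is the step I expect to require the most care, and it is where I would spend the bulk of the write-up, likely by isolating the possible isomorphism types of $L/K$ via the lower-dimensional classification and eliminating each lift by a Jacobi-identity computation in the style of Proposition~\ref{pr22}.
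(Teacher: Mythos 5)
Your plan matches the paper's approach exactly --- in fact the paper's entire proof of this theorem is the single sentence that it ``follows similar methods as in the proof of Theorem~\ref{th2.3}, Proposition~\ref{pr24}'' and Theorems~\ref{tb} and~\ref{th1.2}, which is precisely the induction-on-$m+n$ scheme, with base case $m+n=6$ supplied by Proposition~\ref{pr24} and the inductive step run through Theorem~\ref{th1.2}(i), that you describe. Note only that the paper's standing assumption (stated just before Lemma~\ref{lem1}) is that $\dim L^{2}=m+n-2$ throughout, so the case $\dim L^{2}\leq m+n-3$ that you flag as the main obstacle is excluded by hypothesis and $\dim L/L^{2}=2$ always; also, in your $n=1$, $K\subseteq L_{\overline{1}}$ subcase the stated arithmetic does not actually close, but that subcase is vacuous, since a nonzero odd central ideal inside $L^{2}$ with $\dim L_{\overline{1}}=1$ would force $L_{\overline{1}}\subseteq L^{2}=[L_{\overline{0}},L_{\overline{0}}]\subseteq L_{\overline{0}}$, a contradiction.
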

\begin{proof}
The proof follows similar methods as in the proof of Theorem \ref{th2.3}, Proposition \ref{pr24}, Theorems \ref{tab1} and \ref{th1.2}.  
\end{proof}

We now derive the structure of all nilpotent Lie superalgebras of dimension $(m\vert n)$ when $\gamma(L)\in \lbrace 0, 1, 2\rbrace$.

\begin{theorem}\label{lt}
Let $ L $ be an $(m\vert n)$-dimensional nilpotent Lie superalgebra. 

\begin{itemize}
\item[(i)] There is no such Lie superalgebra with $ \gamma(L)=0. $
\item[(ii)] There is no such Lie superalgebra with $ \gamma(L)=1. $
\item[(iii)]  $ \gamma(L)=2 $ if and only if  $ L $ is isomorphic to one of the Lie superalgebras $ (2\vert 2)_{4}, $ $ (2\vert 2)_{6}, $ $ (1\vert 3)_{1}, $ $ (3\vert 2)_{13} $ or $ (2\vert 3)_{18}. $
\end{itemize}
\end{theorem}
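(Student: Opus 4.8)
The plan is to analyze the equation $\gamma(L)=m+2n-2-\dim\mathcal{M}(L)$ for $\gamma(L)\in\{0,1,2\}$, i.e. to locate all $(m\vert n)$-dimensional nilpotent Lie superalgebras with $\dim L^2=m+n-2$ and $\dim\mathcal{M}(L)\in\{m+2n-2,\,m+2n-3,\,m+2n-4\}$. The key observation is that Theorems \ref{th2.3} and \ref{th2.4} already do most of the heavy lifting: Theorem \ref{th2.3} gives $\dim\mathcal{M}(L)<m+2n-3$ unconditionally, so the values $m+2n-2$ and $m+2n-3$ are \emph{never} attained; this immediately disposes of parts (i) and (ii). For part (iii), $\dim\mathcal{M}(L)=m+2n-4$; by Theorem \ref{th2.4} this forces $m+n\leq 5$ (since for $m+n\geq 6$ with $n\geq 1$ we have the strict bound $\dim\mathcal{M}(L)<m+2n-4$), and also rules out $m+n=6$ via Proposition \ref{pr24} — but in fact Theorem \ref{th2.4} already subsumes that case. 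So the first step is simply to cite these three results to reduce to $4\leq m+n\leq 5$, $n\geq 1$.

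Second, once we are in the range $4\leq m+n\leq 5$, the classification is entirely finite and is spelled out in Table \ref{tab1} of Proposition \ref{pr22}. I would go through that table entry by entry and retain exactly those $L$ with $\dim\mathcal{M}(L)=m+2n-4$. For $(2\vert 2)$ (so $m+2n-4=2$): the entries are $(2\vert 2)_1$ with $\dim\mathcal{M}=1\ne 2$, $(2\vert 2)_4$ and $(2\vert 2)_6$ with $\dim\mathcal{M}=2$ — keep these two. For $(1\vert 3)$ ($m+2n-4=3$): $(1\vert 3)_1$ has $\dim\mathcal{M}=3$ — keep it. For $(1\vert 4)$ ($m+2n-4=5$): $(1\vert 4)_7$ has $\dim\mathcal{M}=3\ne 5$ — discard. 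For $(3\vert 2)$ ($m+2n-4=3$): $(3\vert 2)_5$ has $\dim\mathcal{M}=2\ne 3$, discard; $(3\vert 2)_{13}$ has $\dim\mathcal{M}=3$ — keep it. For $(2\vert 3)$ ($m+2n-4=4$): $(2\vert 3)_{18}$ has $\dim\mathcal{M}=4$ — keep it; while $(2\vert 3)_{19},(2\vert 3)_{22},(2\vert 3)_{23}$ have $\dim\mathcal{M}\in\{3,2\}\ne 4$ — discard. This leaves precisely $(2\vert 2)_4,\ (2\vert 2)_6,\ (1\vert 3)_1,\ (3\vert 2)_{13},\ (2\vert 3)_{18}$, which is the asserted list. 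The converse direction is immediate: each of these five has $\dim L^2=m+n-2$ and the tabulated $\dim\mathcal{M}(L)$ value makes $\gamma(L)=2$.

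The main obstacle is not in the logical skeleton above but in making sure Table \ref{tab1} is genuinely \emph{complete} — that Proposition \ref{pr22} lists \emph{every} $(m\vert n)$-dimensional nilpotent Lie superalgebra with $\dim L^2=m+n-2$ for $4\leq m+n\leq 5$, $n\geq 1$, not just a sample. If the table is exhaustive (which is how it is presented, drawing on the classification in \cite{Nay1, Nay2}), then the enumeration step is purely mechanical. A secondary point to be careful about is the boundary case $m+n=6$: one should confirm that Theorem \ref{th2.4}'s strict inequality $\dim\mathcal{M}(L)<m+2n-4$ indeed covers $m+n=6$ (it does, since $6\geq 6$), so that Proposition \ref{pr24} is not actually needed for part (iii) — though citing it does no harm. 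Finally, I would remark that capability of the five surviving superalgebras follows from Theorem \ref{6} (or a direct check that $Z^*(L)=0$), but since the theorem statement here only concerns the classification, that is handled separately in the paper.

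\begin{proof M}
\textbf{(i) and (ii).} If $\gamma(L)\in\{0,1\}$ then $\dim\mathcal{M}(L)\in\{m+2n-2,\,m+2n-3\}$, and in particular $\dim\mathcal{M}(L)\geq m+2n-3$. This contradicts Theorem \ref{th2.3}, which asserts $\dim\mathcal{M}(L)<m+2n-3$ for every $(m\vert n)$-dimensional nilpotent Lie superalgebra. Hence no such $L$ exists.

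\textbf{(iii).} Suppose $\gamma(L)=2$, so $\dim L^2=m+n-2$ and $\dim\mathcal{M}(L)=m+2n-4$. By Theorem \ref{th2.4}, for $m+n\geq 6$ and $n\geq 1$ we have the strict inequality $\dim\mathcal{M}(L)<m+2n-4$; therefore $m+n\leq 5$, and since $m+n\geq 4$ and $n\geq 1$ by hypothesis, $L$ appears in Table \ref{tab1} of Proposition \ref{pr22}. Scanning that table and keeping only the entries with $\dim\mathcal{M}(L)=m+2n-4$:
\begin{itemize}
\item $(m\vert n)=(2\vert 2)$, $m+2n-4=2$: $(2\vert 2)_4$ and $(2\vert 2)_6$ satisfy $\dim\mathcal{M}(L)=2$, while $(2\vert 2)_1$ has $\dim\mathcal{M}(L)=1$;
\item $(m\vert n)=(1\vert 3)$, $m+2n-4=3$: $(1\vert 3)_1$ satisfies $\dim\mathcal{M}(L)=3$;
\item $(m\vert n)=(1\vert 4)$, $m+2n-4=5$: $(1\vert 4)_7$ has $\dim\mathcal{M}(L)=3\ne 5$;
\item $(m\vert n)=(3\vert 2)$, $m+2n-4=3$: $(3\vert 2)_{13}$ satisfies $\dim\mathcal{M}(L)=3$, while $(3\vert 2)_5$ has $\dim\mathcal{M}(L)=2$;
\item $(m\vert n)=(2\vert 3)$, $m+2n-4=4$: $(2\vert 3)_{18}$ satisfies $\dim\mathcal{M}(L)=4$, while $(2\vert 3)_{19}$, $(2\vert 3)_{22}$, $(2\vert 3)_{23}$ have $\dim\mathcal{M}(L)\in\{2,3\}$.
\end{itemize}
Thus $L$ is isomorphic to one of $(2\vert 2)_4$, $(2\vert 2)_6$, $(1\vert 3)_1$, $(3\vert 2)_{13}$, $(2\vert 3)_{18}$.

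Conversely, each of these five superalgebras has $\dim L^2=m+n-2$ with the value of $\dim\mathcal{M}(L)$ recorded in Table \ref{tab1}, and in each case a direct check gives $m+2n-2-\dim\mathcal{M}(L)=2$; hence $\gamma(L)=2$. This completes the proof.
\end{proof M}
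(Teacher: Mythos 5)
Your proposal is correct and follows essentially the same route as the paper: Theorem \ref{th2.3} disposes of $\gamma(L)\in\{0,1\}$, Theorem \ref{th2.4} reduces part (iii) to $4\leq m+n\leq 5$, and Table \ref{tab1} is then scanned for the entries with $\dim\mathcal{M}(L)=m+2n-4$. Your version is in fact slightly more explicit than the paper's, since you carry out the table check entry by entry and flag the (legitimate) dependence on the completeness of Proposition \ref{pr22}'s classification.
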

\begin{proof}
Suppose $ \gamma(L)=0 $ or $ 1. $ Then $ \dim \mathcal{M}(L)=m+2n-2$ or $ m+2n-3, $ respectively. Therefore, by Theorem \ref{th2.3}, there exists no $(m\vert n)$-dimensional nilpotent Lie superalgebra with $ 4\leq m+n\leq 5 $ and $ n\geq 1 $. 

For part (iii), using Theorem \ref{th2.4}, there is no such nilpotent Lie superalgebra $ L $ with $ \dim \mathcal{M}(L)=m+2n-4$ and $ \dim L^{2}=m+n-2 $ for $ m+n\geq 6. $ Hence, for every nilpotent Lie superalgebra with $ \dim \mathcal{M}(L)=m+2n-4$ and $ \dim L^{2}=m+n-2 $, we have $ m+n\leq 5. $ By examining Table \ref{tab1}, $ L $ should be isomorphic to one of the Lie superalgebras $ (2\vert 2)_{4}, $ $ (2\vert 2)_{6}, $ $ (1\vert 3)_{1}, $ $ (3\vert 2)_{13} $ or $ (2\vert 3)_{18}. $ The converse is obtained by referring to Table \ref{tab1}.
\end{proof}
\begin{cor}
Let $ L $ be an $(m\vert n)$-dimensional nilpotent Lie superalgebra with $ n+m \geq6 $. Then $$ \dim \mathcal{M}(L)\leq m+2n-5.$$
\end{cor}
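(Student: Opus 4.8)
The plan is to deduce the corollary directly from Theorem \ref{th2.4} together with the constraint on the derived subsuperalgebra imposed implicitly throughout this section. First I would recall that Theorem \ref{th2.4} already gives the strict bound $\dim \mathcal{M}(L) < m+2n-4$ for every $(m\vert n)$-dimensional nilpotent Lie superalgebra with $m+n \geq 6$ and $n \geq 1$; since $\dim \mathcal{M}(L)$ is a non-negative integer, this is equivalent to $\dim \mathcal{M}(L) \leq m+2n-5$. So the bulk of the work is genuinely done, and the corollary is essentially a restatement; the only point requiring care is to confirm that the hypotheses of Theorem \ref{th2.4} are met, in particular that $n \geq 1$ is not an extra assumption we are silently adding.

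The subtlety is the case $n = 0$, which is formally allowed by "$n+m \geq 6$" in the corollary's hypothesis but excluded from Theorem \ref{th2.4}. If the intended reading is that $n \geq 1$ (consistent with the standing assumption of the section, stated right after Proposition \ref{pr22}), then nothing more is needed. If $n = 0$ must be covered, I would instead invoke the classical Lie algebra bound: for an $m$-dimensional nilpotent Lie algebra with $\dim L^2 = m-2$, one has $\dim \mathcal{M}(L) \leq m-2 = m + 2n - 2$ by \cite[Theorem 3.1]{Nir} (as already used in Lemma \ref{lem2}(ii)), and the sharper estimate $\dim \mathcal{M}(L) \leq m+2n-5$ for $m \geq 6$ follows from the finer classification of \cite{Nir} of nilpotent Lie algebras whose Schur multiplier is close to maximal. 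In either reading, the corollary reduces to quoting an already-established bound and rounding a strict integer inequality.

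Concretely, the steps I would carry out are: (1) state that by the standing hypothesis $\dim L^2 = m+n-2$ and $n \geq 1$, Theorem \ref{th2.4} applies whenever $m+n \geq 6$; (2) write $\dim \mathcal{M}(L) < m+2n-4$; (3) observe both sides are integers, so $\dim \mathcal{M}(L) \leq m+2n-5$; (4) if needed, dispatch $n=0$ via \cite{Nir}. The main (and only) obstacle is purely expository — making sure the reader sees that "$n+m \geq 6$" in the corollary is to be read alongside the section-wide conventions $n \geq 1$ and $\dim L^2 = m+n-2$, so that no new argument is required beyond Theorem \ref{th2.4}.

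\begin{proof}
By the standing assumptions of this section, $\dim L^{2}=m+n-2$ and $n\geq 1$. Since $m+n\geq 6$, Theorem \ref{th2.4} yields $\dim \mathcal{M}(L)<m+2n-4$. As $\dim \mathcal{M}(L)$ is a non-negative integer, this strict inequality is equivalent to $\dim \mathcal{M}(L)\leq m+2n-5$.
\end{proof}
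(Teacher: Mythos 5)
Your proof is correct and matches the paper's (implicit) derivation: the corollary is stated without proof precisely because it is the integer-rounding of the strict inequality $\dim \mathcal{M}(L) < m+2n-4$ from Theorem \ref{th2.4} (equivalently, of the Main Theorem's conclusion that $\gamma(L)\leq 2$ forces $m+n\leq 5$). Your remark that the hypotheses $n\geq 1$ and $\dim L^{2}=m+n-2$ must be read in from the section-wide conventions is a fair and accurate observation about the statement as written.
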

\begin{cor}
Let $ L $ be an $(m\vert n)$-dimensional nilpotent Lie superalgebra. If $\gamma(L)=2, $ then $ L $ is capable.
\begin{proof}
Let $ L $ be an $(m\vert n)$-dimensional nilpotent Lie superalgebra with the derived supersubalgebra of dimension $ m+n-2 $ and $ \dim \mathcal{M}(L)=m+2n-4.$ Then $ \dim \mathcal{M}(L/K)\leq \dim \mathcal{M}(L) $ by Lemmas \ref{lem1} and \ref{lem2}. Hence, $ (2\vert 2)_{4}, $ $ (2\vert 2)_{6}, $ $ (1\vert 3)_{1}, $ $ (3\vert 2)_{13} $ and  $ (2\vert 3)_{18} $ are capable by Theorem \ref{6}. The result follows. 
\end{proof}
\end{cor}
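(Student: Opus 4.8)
The statement to prove is the final corollary: if $\gamma(L)=2$, then $L$ is capable. The plan is to combine the classification from Theorem \ref{lt}(iii) with the criterion for capability given in Theorem \ref{6}, using the behaviour of the Schur multiplier under central quotients recorded in Lemmas \ref{lem1} and \ref{lem2}.

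First I would reduce to the five superalgebras on the list. By hypothesis $\gamma(L)=2$ means $\dim \mathcal{M}(L)=m+2n-4$, and the running assumption throughout this section is $\dim L^2 = m+n-2$ with $m+n\geq 4$, $n\geq 1$; so Theorem \ref{lt}(iii) applies and $L$ is isomorphic to one of $(2\vert 2)_4$, $(2\vert 2)_6$, $(1\vert 3)_1$, $(3\vert 2)_{13}$, or $(2\vert 3)_{18}$. Thus it suffices to prove each of these five is capable.

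Next I would verify capability for each listed superalgebra via Theorem \ref{6}. For a nonzero central ideal $K\subseteq Z(L)\cap L^2$ of dimension one, Theorem \ref{6} says $K\subseteq Z^*(L)$ — equivalently the natural map $\mathcal{M}(L)\to\mathcal{M}(L/K)$ is injective — precisely when $\dim \mathcal{M}(L/K) + \dim(K\cap L^2)\geq \dim\mathcal{M}(L)$, i.e. $\dim\mathcal{M}(L/K)\geq \dim\mathcal{M}(L)-1$. So the strategy is: if we can show that \emph{every} one-dimensional $K\subseteq Z(L)\cap L^2$ fails this inequality, then no such $K$ lies in the epicenter, and since $Z^*(L)$ is itself contained in $Z(L)\cap L^2$ (because $L/L^2$ is always capable, being abelian, and $L/Z(L)$ considerations force $Z^*(L)\subseteq L^2$), we conclude $Z^*(L)=0$, i.e. $L$ is capable. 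Concretely, for each of the five cases one computes, using Lemmas \ref{lem1} and \ref{lem2}, that a one-dimensional central quotient has $\dim\mathcal{M}(L/K)$ strictly smaller than $\dim\mathcal{M}(L)-1=m+2n-5$: indeed Lemma \ref{lem1}(i) and Lemma \ref{lem2}(i) cap it at $2$ when the quotient has dimension sum $3$, while in higher dimension the relevant bounds from those lemmas together with Theorems \ref{th2.3}/\ref{th2.4} keep $\dim\mathcal{M}(L/K)$ below the threshold. Since each candidate has $m+n\in\{4,5\}$, the quotient $L/K$ has $m+n-1\in\{3,4\}$, and $\dim(L/K)^2=m+n-3\in\{1,2\}$, so one checks the small cases directly against Table \ref{tab1} and part (i) of the lemmas.

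The main obstacle is the bookkeeping in the last step: one must confirm, for each of the five superalgebras and each parity of the one-dimensional central ideal $K$ (whether $K\subseteq L_{\overline 0}$ or $K\subseteq L_{\overline 1}$), that $\dim\mathcal{M}(L/K)\leq \dim\mathcal{M}(L)-1$ actually \emph{holds}, so that Theorem \ref{6}(iii) can be applied to every relevant $K$ simultaneously; a single quotient violating the bound would obstruct the argument. This amounts to identifying the isomorphism type of each $L/K$ (again via Table \ref{tab1} or the classifications underlying it) and reading off its multiplier dimension, which is routine but case-heavy. Once all one-dimensional central ideals in $L^2$ are ruled out of $Z^*(L)$, the inclusion $Z^*(L)\subseteq Z(L)\cap L^2$ forces $Z^*(L)=0$, establishing capability and completing the proof.
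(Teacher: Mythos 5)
Your overall architecture matches the paper's: reduce to the five superalgebras via Theorem \ref{lt}(iii), then show that no one-dimensional graded central ideal $K\subseteq Z(L)\cap L^{2}$ lies in $Z^{*}(L)$, and conclude $Z^{*}(L)=0$. The genuine problem is that you have inverted the numerical criterion you extract from Theorem \ref{6}. Condition (i) there reads $\mathcal{M}(L)\cong \mathcal{M}(L/K)/(K\cap L^{2})$, which at the level of dimensions is $\dim \mathcal{M}(L/K)=\dim \mathcal{M}(L)+\dim (K\cap L^{2})$; since the Ganea-type sequence always gives $\dim \mathcal{M}(L/K)\le \dim \mathcal{M}(L)+\dim (K\cap L^{2})$, injectivity of $\mathcal{M}(L)\to \mathcal{M}(L/K)$ is equivalent to $\dim \mathcal{M}(L/K)\ge \dim \mathcal{M}(L)+1$ (for $\dim (K\cap L^{2})=1$), not to $\dim \mathcal{M}(L/K)\ge \dim \mathcal{M}(L)-1$ as you assert. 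Indeed, injectivity of a linear map into $\mathcal{M}(L/K)$ already forces $\dim \mathcal{M}(L/K)\ge \dim \mathcal{M}(L)$, so your stated equivalence cannot be correct.

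This is not cosmetic: it makes your verification step both stronger than needed and false. Defeating your criterion requires $\dim \mathcal{M}(L/K)\le \dim \mathcal{M}(L)-2=m+2n-6$; for $(2\vert 2)_{4}$ this would demand $\dim \mathcal{M}(L/K)=0$, whereas Lemmas \ref{lem1}(i) and \ref{lem2}(i) give $\dim \mathcal{M}(L/K)\in\{1,2\}$. So the inequality you propose to refute actually holds, and your argument would (wrongly) place such $K$ inside $Z^{*}(L)$. The correct and attainable target is $\dim \mathcal{M}(L/K)\le \dim \mathcal{M}(L)$, which is exactly what Lemmas \ref{lem1} and \ref{lem2} supply and what the paper's proof invokes: then $\dim \mathcal{M}(L/K)<\dim \mathcal{M}(L)+\dim(K\cap L^{2})$, condition (i) of Theorem \ref{6} fails, and $K\not\subseteq Z^{*}(L)$. (Your observation that one also needs $Z^{*}(L)\subseteq Z(L)\cap L^{2}$ is a fair point the paper leaves implicit, but your justification via ``$L/L^{2}$ is capable, being abelian'' is not valid in general---one-dimensional abelian superalgebras are not capable; for the five algebras at hand one instead checks directly that $Z(L)\subseteq L^{2}$, so every one-dimensional central ideal is covered by the lemmas.)
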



\begin{thebibliography}{99}
\bibitem{Al} 
Alvarez, M.  A. and  Hernández, I.; Varieties of nilpotent Lie superalgebras of dimension $\leq 5$. Forum Math. 32 (2020), no. 3, 641--661.

\bibitem{G} 
Gilg, M.; Low-dimensional filiform Lie superalgebras. Revista Matemática Complutense, 14(2), (2001), 463--478.

\bibitem{Nay1}
Nayak, S.; Multipliers of nilpotent Lie superalgebras. Comm. Algebra 47 (2019), no. 2, 689--705.

\bibitem{Nay2}
Nayak, S.; Classification of finite dimensional nilpotent Lie superalgebras by their multipliers. J. Lie Theory 31.2 (2021), 439--458.

\bibitem{Nir}
Niroomand, P. and  Russo, F.; A restriction on the Schur multiplier of nilpotent Lie algebras. The Electronic Journal of Linear Algebra 22 (2011), 1--9.

\bibitem{Rud1}
Padhan, R. N. and Nayak, S.;  On capability and the Schur multiplier of some nilpotent Lie superalgebras.  Linear and Multilinear Algebra (2020), 1--12.

\bibitem{Rud2}
Padhan, R. N., Nayak, S. and Pati, K. C.; Detecting capable Lie superalgebras. Communications in Algebra 49.10, (2021), 4274--4290. 

\end{thebibliography}
\end{document}